\newcommand{\Z}{\mathbb{Z}}
\newcommand{\Q}{\mathbb{Q}}
\newcommand{\cI}{\mathcal{I}}
\newcommand{\cL}{\mathcal{L}}
\newcommand{\wtil}[1]{\widetilde{#1}}
\newcommand{\ol}[1]{\overline{#1}}
\DeclareMathOperator{\pd}{pd}
\DeclareMathOperator{\Cl}{Cl}
\DeclareMathOperator{\ord}{ord}
\let\oldenumerate\enumerate
\renewcommand{\enumerate}{
   \oldenumerate
   \setlength{\itemsep}{1pt}
   \setlength{\parskip}{0pt}
   \setlength{\parsep}{0pt}
}
\let\olditemize\itemize
\renewcommand{\itemize}{
   \olditemize
   \setlength{\itemsep}{1pt}
   \setlength{\parskip}{0pt}
   \setlength{\parsep}{0pt}
}
\theoremstyle{plain}
\newtheorem{thm}{Theorem}[section]
\newtheorem{lem}[thm]{Lemma}
\newtheorem{prop}[thm]{Proposition}
\theoremstyle{definition}
\newtheorem{defn}[thm]{Definition}
\newtheorem{rem}[thm]{Remark}
\newcommand{\bE}{\mathbb{E}}
\newcommand{\sfr}{\mathsf{r}}
\DeclareMathOperator{\Div}{Div}
\DeclareMathOperator{\Pic}{Pic}
\DeclareMathOperator{\Jac}{Jac}
\DeclareMathOperator{\proj}{proj}
\DeclareMathOperator{\Stab}{Stab}
\title
[Kida's formula for graphs]
{Kida's formula for graphs with ramifications}
\author[T. Kataoka]{Takenori Kataoka}
\address{Department of Mathematics, Faculty of Science Division II, Tokyo University of Science.
1-3 Kagurazaka, Shinjuku-ku, Tokyo 162-8601, Japan}
\email{tkataoka@rs.tus.ac.jp}
\keywords{graphs, Jacobian groups, Iwasawa theory, Kida's formula}
\subjclass[2020]{05C25 (Primary), 11R23}
\date{\today}
\begin{document}

\maketitle

\begin{abstract}
Recently Iwasawa theory for graphs is developing.
A significant achievement includes an analogue of Iwasawa class number formula, which describes the asymptotic growth of the numbers of spanning trees for $\Z_p$-coverings of graphs.
Moreover, an analogue of Kida's formula concerning the behavior of the $\lambda$- and $\mu$-invariants is obtained for unramified coverings.
In this paper, we establish Kida's formula for possibly ramified coverings.
\end{abstract}

\section{Introduction}\label{sec:intro}

\subsection{Classical Iwasawa theory}\label{ss:bac1}

We begin with a brief review of classical Iwasawa theory.
Let $p$ be a prime number.
In classical Iwasawa theory, we mainly deal with a $\Z_p$-extension $K_{\infty}/K$ of number fields, that is,
\[
K = K_0 \subset K_1 \subset K_2 \subset \cdots,
\]
where $K_n/K$ is a $\Z/p^n\Z$-extension.
The so-called Iwasawa class number formula due to Iwasawa \cite[Theorem 11]{Iwa59} (see also Washington \cite[Theorem 13.13]{Was97}) claims
\[
\ord_p(\# \Cl(K_n)) = \lambda n + \mu p^n + \nu,
\quad
n \gg 0
\]
for some integers $\lambda = \lambda(K_{\infty}/K) \geq 0$, $\mu = \mu(K_{\infty}/K) \geq 0$, and $\nu$, which are called the Iwasawa invariants.
Here, $\Cl(K_n)$ denotes the class group of $K_n$ and $\ord_p(-)$ denotes the $p$-adic valuation that is normalized by $\ord_p(p) = 1$.

Kida \cite{Kid80} obtained a formula that describes the behavior of the Iwasawa invariants when $K_{\infty}/K$ varies.
To be concrete, let $G$ be a finite $p$-group and $\wtil{K}_{\infty}/\wtil{K}$ be another $\Z_p$-extension such that $\wtil{K}_n/K$ is a $\Z/p^n \Z \times G$-extension for any $n \geq 0$.
Then conceptually the formula claims that $\mu(K_{\infty}/K) = 0$ is equivalent to $\mu(\wtil{K}_{\infty}/\wtil{K}) = 0$, in which case $\lambda(\wtil{K}_{\infty}/\wtil{K})$ can be described by using $\lambda(K_{\infty}/K)$ together with the ramification information in $\wtil{K}_{\infty}/K_{\infty}$.
(To be precise, the original Kida's formula only concerns the minus components for the cyclotomic $\Z_p$-extensions of CM-fields.)

\subsection{Iwasawa theory for graphs}\label{ss:bac2}

The main object in this paper is graphs, which we assume to be finite and connected in this introduction.
For a graph $X$, let $\kappa(X)$ be the number of spanning trees of $X$.
By Kirchhoff's theorem, the number $\kappa(X)$ is equal to the order of the Jacobian group $\Jac(X)$ of $X$ (see \S \ref{ss:graphs}).
Note that $\Jac(X)$ is also known as the Picard group of degree zero, the sandpile group, and the critical group.
This interpretation enables us to study $\kappa(X)$ by an algebraic investigation of $\Jac(X)$.

Let $X_{\infty}/X$ be an {\it unramified} (or {\it unbranched}) $\Z_p$-covering of graphs, that is, we are given a family of graphs
\[
X = X_0 \leftarrow X_1 \leftarrow X_2 \leftarrow \cdots,
\]
where $X_n/X$ is an unramified $\Z/p^n\Z$-covering.
This $X_n$ is called the $n$-th layer of $X_{\infty}/X$.
Then an analogue of the Iwasawa class number formula, due to Gonet \cite[Theorem 1.1]{Gon22} or McGown--Valli\'{e}res \cite[Theorem 6.1]{MV24} (see also the author's article \cite[\S 8.1]{Kata_21}), claims
\[
\ord_p(\kappa(X_n)) = \lambda n + \mu p^n + \nu,
\quad
n \gg 0
\]
for some integers $\lambda = \lambda(X_{\infty}/X) \geq 0$, $\mu = \mu(X_{\infty}/X) \geq 0$, and $\nu$.

To discuss Kida's formula, let $G$ be a finite $p$-group.
Let $\wtil{X}_{\infty}/\wtil{X}$ be another unramified $\Z_p$-covering such that the $n$-th layer $\wtil{X}_n$ is an {\it unramified} $\Z/p^n \Z \times G$-covering of $X$ for any $n \geq 0$.
Then the analogue of Kida's formula, due to Ray--Valli\'{e}res \cite[Theorem 4.1]{RV22} (see also \cite[\S 8.2]{Kata_21}), says that $\mu(X_{\infty}/X) = 0$ is equivalent to $\mu(\wtil{X}_{\infty}/\wtil{X}) = 0$, in which case we have
\[
\lambda(\wtil{X}_{\infty}/\wtil{X}) + 1 = [\wtil{X}_{\infty}: X_{\infty}] (\lambda(X_{\infty}/X) + 1)
\]
with $[\wtil{X}_{\infty}: X_{\infty}] = \# G$.
Compared to the original formula of Kida, the ramification terms disappear naturally because $\wtil{X}_{\infty}/X_{\infty}$ is assumed to be unramified.

\subsection{Main theorem}\label{ss:intro_th}

As we have emphasized, Iwasawa theory for graphs has been developed mainly for unramified coverings so far.
In a latest paper of Gambheera--Valli\'{e}res \cite{GV}, they initiated an Iwasawa theoretic study of {\it ramified} (or {\it branched}) coverings of graphs (see \S \ref{ss:cov}).
As a fundamental result, for a possibly ramified $\Z_p$-covering $X_{\infty}/X$, they proved that the aforementioned analogue of the Iwasawa class number formula is still valid literally (see Theorem \ref{thm:ICNF}).

In this paper, we establish Kida's formula for graphs with ramifications.
As above, let $G$ be a finite $p$-group and we consider $\Z_p$-coverings $X_{\infty}/X$ and $\wtil{X}_{\infty}/\wtil{X}$ such that $\wtil{X}_n/X$ is a $\Z/p^n \Z \times G$-covering for any $n \geq 0$.
Let us stress that we allow the ramifications both in $\wtil{X}_{\infty}/X_{\infty}$ and in $X_{\infty}/X$.
The main theorem is the following:

\begin{thm}\label{thm:main_A}
We have $\mu(\wtil{X}_{\infty}/\wtil{X}) = 0$ if and only if we have $\mu(X_{\infty}/X) = 0$ and $(\star)$ holds:
\begin{quote}
$(\star)$
any vertex of $X$ is either ramified in $X_{\infty}/X$ or unramified in $\wtil{X}_{\infty}/X_{\infty}$.
\end{quote}
If these equivalent conditions hold, then we have
\[
\lambda(\wtil{X}_{\infty}/\wtil{X}) + 1
= [\wtil{X}_{\infty}: X_{\infty}] (\lambda(X_{\infty}/X) + 1)
 - \sum_{v \in V_X} n_v(\wtil{X}_{\infty}/X) (m_v(\wtil{X}_{\infty}/X_{\infty}) - 1).
\]
Here, $V_X$ denotes the set of vertices of $X$, $m_v(\wtil{X}_{\infty}/X_{\infty})$ denotes the ramification index of $v$ in $\wtil{X}_{\infty}/X_{\infty}$, and $n_v(\wtil{X}_{\infty}/X)$ denotes the number of vertices of $\wtil{X}_{\infty}$ lying over $v$.
\end{thm}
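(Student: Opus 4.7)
The plan is to translate the theorem into a statement about Iwasawa $\Lambda$-modules and then carry out a Galois descent along $\wtil{X}_\infty/X_\infty$, following the strategy familiar from classical Kida-type theorems. Set $\Gamma = \Gal(\wtil{X}_\infty/\wtil{X}) \cong \Gal(X_\infty/X) \cong \Z_p$, $\Lambda = \Z_p[[\Gamma]]$, and $G = \Gal(\wtil{X}_\infty/X_\infty)$, and form the Iwasawa modules
\[
Y = \varprojlim_n \Jac(X_n) \otimes \Z_p, \qquad \wtil{Y} = \varprojlim_n \Jac(\wtil{X}_n) \otimes \Z_p,
\]
with transition maps induced by the natural pushforwards of divisors. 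By the ramified class number formula of Gambheera--Vallières (Theorem \ref{thm:ICNF}), both are finitely generated torsion $\Lambda$-modules whose characteristic ideals determine $\lambda$ and $\mu$, and $\wtil{Y}$ is moreover a $\Lambda[G]$-module via the natural $G$-action on $\wtil{X}_n$.

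Next I would construct a $G$-equivariant descent comparison between $\wtil{Y}$ and $Y$. The expected output is a short exact sequence of $\Lambda[G]$-modules relating $\wtil{Y}$, $Y$, and a middle term that decomposes as a direct sum indexed by $v \in V_X$ of local modules encoding the inertia data at $v$. Such a sequence should be derivable from a $G$-equivariant comparison of the ramified combinatorial Laplacians on $\wtil{X}_n$ and $X_n$ that underlies the proof of the ramified class number formula. For each vertex $v$, the local module is controlled by the inertia subgroup of $\Gamma$ at $v$ and the ramification index $m_v(\wtil{X}_\infty/X_\infty)$: if $v$ ramifies in $X_\infty/X$, the $\Gamma$-inertia contracts the relevant piece; if $v$ is unramified in $\wtil{X}_\infty/X_\infty$, the $G$-action on the fiber is trivial; in either case the local piece remains finitely generated over $\Z_p$.

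From this descent sequence, the $\mu$-equivalence would be transparent: violating $(\star)$ means that some vertex $v$ is unramified in $X_\infty/X$ yet ramified in $\wtil{X}_\infty/X_\infty$, and the corresponding local summand carries a $\Lambda$-module of positive $\mu$-invariant (the $\Gamma$-action remains $p$-adically nontrivial while the new ramification in $\wtil{X}_\infty/X_\infty$ forces growing $p$-torsion), so $\mu(\wtil{X}_\infty/\wtil{X}) > 0$; conversely, under $(\star)$ and $\mu(X_\infty/X)=0$ every piece of the descent is $\Z_p$-finitely generated. The $\lambda$-formula is then obtained by taking $\Z_p$-ranks in the sequence: the multiplicative factor $[\wtil{X}_\infty : X_\infty]$ comes from the fact that the main piece of $\wtil{Y}$ becomes cohomologically trivial as a $\Z_p[G]$-module under $(\star)$, hence of $\Z_p$-rank divisible by $\#G$ in the expected way; the "+1" shifts on both sides normalize away the trivial-character (constant-function) contribution, reflecting the zero eigenvalue of the Laplacian; and the correction $\sum_{v \in V_X} n_v(m_v - 1)$ collects the local loss of $\Z_p$-rank at each vertex ramified in $\wtil{X}_\infty/X_\infty$, matching a Hurwitz-type formula.

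The main obstacle is the precise construction and vertex-by-vertex analysis of the descent exact sequence. In the unramified case of Ray--Vallières, this boils down to a $G$-equivariant Laplacian comparison in which the local contributions are all cohomologically trivial, but with ramifications the modified Laplacian acquires vertex-dependent terms that must be tracked carefully. Identifying each local module as precisely the expected induced or coinduced $\Z_p[G]$-module of rank $n_v(m_v - 1)$, and verifying that condition $(\star)$ is exactly --- not merely sufficient for --- the criterion that it has $\mu = 0$, is the subtlest point of the argument.
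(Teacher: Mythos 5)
Your overall strategy---descend along $G$ via coinvariants, detect $\mu$ by Nakayama over $\Z_p[G]$, and get $\lambda$ by a rank count---is indeed the paper's strategy (it is Proposition \ref{prop:alg_Kida} combined with a comparison sequence). But the proposal has a genuine gap at its center: the descent exact sequence is only postulated (``the expected output is \dots'', ``should be derivable from \dots''), never constructed, and you yourself flag its construction as the main obstacle. That construction is the actual content of the proof. The paper obtains it by first showing that every Galois covering is a derived graph of a voltage graph (Propositions \ref{prop:der_fin} and \ref{prop:der_infin}), which yields the explicit four-term presentation \eqref{eq:Pic_func_5} of $\Pic_{\Z_p}$ by $\bigoplus_v \Z_p[[\wtil{\Gamma}/\wtil{I_v}]]$ with the transition map $\beta$ acting as multiplication by $\#(H\cap \wtil{I_v})$ on the $v$-component. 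Taking $G$-coinvariants of that presentation and applying the snake lemma gives the precise sequence
\[
0 \to \bigoplus_{v \in V_X^0} (\Z_p/\# \wtil{I_v})[[\Gamma]] \to \Pic_{\Z_p}(\wtil{X}_{\infty})_G \to \Pic_{\Z_p}(X_{\infty}) \to 0,
\]
from which the $\mu$-equivalence and condition $(\star)$ fall out exactly (the local term has $\mu>0$ iff $v$ is unramified in $X_\infty/X$ but ramified in $\wtil{X}_\infty/X_\infty$). Without an explicit presentation of this kind, your vertex-by-vertex analysis cannot be carried out.

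A second, more technical gap: your $\lambda$-step asserts that ``the main piece of $\wtil{Y}$ becomes cohomologically trivial as a $\Z_p[G]$-module under $(\star)$,'' which amounts to applying Proposition \ref{prop:alg_Kida}(2). But that proposition requires $\pd_{\Z_p[[\Gamma]][G]} \leq 1$, and this fails for $\Pic_{\Z_p}(\wtil{X}_{\infty})$ in the ramified setting precisely because the presentation above is not square: the source is indexed by $V_X^0$ and the target by all of $V_X$. The paper remedies this by splitting off the free finite-rank summands $\Z_p[\wtil{\Gamma}/\wtil{I_v}]$ at the vertices $v$ ramified in $X_\infty/X$ and applying the descent to the modified module $\Pic'_{\Z_p}$, which does have $\pd \leq 1$. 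The correction term $\sum_v n_v(m_v-1)$ then arises from comparing $\lambda(\Pic)$ with $\lambda(\Pic')$ on both levels via the identity $(\# G)[\Gamma: I_v] - [\wtil{\Gamma}: \wtil{I_v}] = [\wtil{\Gamma}: \wtil{I_v}](\#(G \cap \wtil{I_v}) - 1)$, not from a ``local loss of rank'' inside the descent sequence itself as you describe. You would need to supply both the explicit presentation and this projective-dimension repair for the argument to close.
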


\begin{rem}\label{rem:main_valid}
Theorem \ref{thm:main_A} is formulated symbolically, and to be precise it is better to rephrase it by using only finite graphs $\wtil{X}_n$ and $X_n$ instead of $\wtil{X}_{\infty}$ and $X_{\infty}$.
The degree $[\wtil{X}_{\infty}: X_{\infty}]$ is understood to be $\# G$ (which equals the degree $[\wtil{X}_n: X_n]$ for any $n$).
Let $v$ be a vertex of $X$.
\begin{itemize}
\item
Using the ramification indices $m_v(X_n/X) \geq 1$, we say $v$ is ramified in $X_{\infty}/X$ if $m_v(X_n/X) > 1$ for some $n$.
\item
Using the ramification indices $m_v(\wtil{X}_n/X_n) \geq 1$, which are decreasing with respect to $n$, we define $m_v(\wtil{X}_{\infty}/X_{\infty})$ as its limit.
We say $v$ is unramified in $\wtil{X}_{\infty}/X_{\infty}$ if $m_v(\wtil{X}_{\infty}/X_{\infty}) = 1$.
\item
Using the numbers $n_v(\wtil{X}_n/X)$ of vertices of $\wtil{X}_n$ lying above $v$, which are increasing with respect to $n$, we define $n_v(\wtil{X}_{\infty}/X)$ as its limit.
\end{itemize}
In general, $n_v(\wtil{X}_{\infty}/X)$ can be infinite.
However, in this case, $v$ is unramified in $X_{\infty}/X$, so condition $(\star)$ implies that $m_v(\wtil{X}_{\infty}/X_{\infty}) = 1$ and we set
\[
n_v(\wtil{X}_{\infty}/X) (m_v(\wtil{X}_{\infty}/X_{\infty}) - 1) = 0.
\]
This is why the formula makes sense.
\end{rem}

\begin{rem}
(1)
If $\wtil{X}_{\infty}/X_{\infty}$ is unramified, then condition $(\star)$ holds, no matter how $X_{\infty}/X$ is ramified.
Therefore, we have the equivalence between $\mu = 0$ for $\wtil{X}_{\infty}/\wtil{X}$ and for $X_{\infty}/X$, in which case
\[
\lambda(\wtil{X}_{\infty}/\wtil{X}) + 1
= [\wtil{X}_{\infty}: X_{\infty}] (\lambda(X_{\infty}/X) + 1)
\]
holds.
This formula is of the same form as the previous one for the unramified case.

(2)
In the original Kida's formula concerning class groups, the terms corresponding to $n_v(\wtil{X}_{\infty}/X)$ are automatically finite since we deal with cyclotomic $\Z_p$-extensions of number fields, in which all finite primes are finitely split.
Contrary to this, Iwasawa \cite[Theorem 1]{Iwa73} constructed $\Z_p$-extensions of number fields with $\mu > 0$ by using the anti-cyclotomic $\Z_p$-extensions of imaginary quadratic fields, in which finite primes often splits completely.
Moreover, Iwasawa \cite[Theorem 2]{Iwa73} also showed that $\mu = 0$ is retained as long as a condition like $(\star)$ holds.
These observations imply that condition $(\star)$ is natural.
\end{rem}

\subsection{Organization of this paper}\label{ss:org}

In \S \ref{sec:notation}, we introduce basic notations concerning graphs.
In \S \ref{sec:funct}, we study precise functoriality properties of Picard groups, which is essential to prove the main theorem.
Then in \S \ref{sec:main}, we prove the main theorem.
Finally, in \S \ref{sec:eg}, we observe very simple examples.

\section{Preliminaries}\label{sec:notation}

In \S \ref{ss:graphs}, we fix our convention concerning graphs and then define the Jacobian groups.
In \S \ref{ss:cov}, we define Galois coverings of graphs with ramifications.
In \S\S \ref{ss:vol_graphs}--\ref{ss:Gal_vol_graphs}, we show that Galois coverings arise as the derived graphs of voltage graphs.

\subsection{Graphs and Jacobian groups}\label{ss:graphs}

We first introduce the notions of graphs and the Jacobian groups.
See \cite[\S \S 2.1--2.2]{GV} or the author's article \cite[\S 2]{Kata_21} for the details.
We use Serre's formalism of graphs \cite[Chapter I, \S 2.1]{Ser80}.

\begin{defn}
A finite graph $X$ consists of a finite set $V_X$ of vertices, a finite set $\bE_X$ of edges, an involution on $\bE_X$ without fixed points, denoted by $e \mapsto \ol{e}$, and two maps $s, t: \bE_X \to V_X$ satisfying $s(\ol{e}) = t(e)$ and $t(\ol{e}) = s(e)$.
\end{defn}

Each $e \in \bE_X$ is regarded as an edge that connects $s(e)$ to $t(e)$, and $\ol{e}$ is the opposite of $e$.
For each $v \in V_X$, we write $\bE_{X, v}$ for the set of $e \in \bE_X$ such that $s(e) = v$.
Note that we allow multi-edges and loops, so our graphs may be called multigraphs.
We will usually study connected graphs, which means that any two vertices can be connected by a path of edges.

\begin{defn}
Let $X$ be a finite connected graph.
We define the divisor group of $X$ as the free $\Z$-module on the set $V_X$, i.e., 
\[
\Div(X) = \bigoplus_{v \in V_X} \Z[v].
\]
We define a $\Z$-homomorphism $\cL_X: \Div(X) \to \Div(X)$, called the Laplacian operator, by
\[
\cL_X([v]) = \sum_{e \in \bE_{X, v}} \left([v] - [t(e)] \right),
\quad
v \in V_X.
\]
We define the Picard group $\Pic(X)$ of $X$ as the cokernel of $\cL_X$.

Let $\deg_X: \Div(X) \to \Z$ be the degree map, i.e., $\deg_X$ is the $\Z$-homomorphism that sends any $[v]$ to $1$.
We clearly have $\deg_X \circ \cL_X = 0$, so $\deg_X$ factors through $\Pic(X)$.
Then we define the Jacobian group $\Jac(X)$ as the kernel of the induced map $\deg_X: \Pic(X) \to \Z$.
\end{defn}

It is known that $\Jac(X)$ is a finite abelian group.
Indeed, by Kirchhoff's theorem, the order of $\Jac(X)$ is equal to the number $\kappa(X)$ of spanning trees of $X$.
It also follows that the kernel of $\cL_X$ is a free $\Z$-module of rank one with a basis $\sum_{v \in V_X} [v]$.
Therefore, we have two fundamental exact sequences
\begin{equation}\label{eq:Pic_defn}
0 \to \Z \overset{\iota_X}{\to} \Div(X) \overset{\cL_X}{\to} \Div(X) \to \Pic(X) \to 0,
\end{equation}
where $\iota_X$ sends $1$ to $\sum_{v \in V_X} [v]$, and
\begin{equation}\label{eq:Jac_defn}
0 \to \Jac(X) \to \Pic(X) \overset{\deg_X}{\to} \Z \to 0.
\end{equation}

\subsection{Galois coverings of graphs}\label{ss:cov}

We introduce the notion of (possibly ramified) Galois coverings, basically following \cite[\S 3.1]{GV}.
We begin with the definition of morphisms of graphs.
Let $X$ and $Y$ be finite graphs.

\begin{defn}[{\cite[Definition 2.1]{GV}}]\label{defn:morph}
A morphism $f: Y \to X$ of graphs consists of maps $f_V: V_Y \to V_X$ and $f_{\bE}: \bE_Y \to \bE_X$ such that
\[
f_V(s(e)) = s(f_{\bE}(e)),
\quad
f_V(t(e)) = t(f_{\bE}(e)),
\quad
f_{\bE}(\ol{e}) = \ol{f_{\bE}(e)}
\]
for any $e \in \bE_Y$.
\end{defn}

\begin{defn}[{\cite[Definition 3.1]{GV} or Sunada \cite[page 69]{Sun13}}]\label{defn:cov}
A (ramified) covering $f: Y \to X$ is a morphism of graphs that satisfies the following condition:
\begin{quote}
For each vertex $w \in V_Y$, there is a positive integer $m_w(f)$ such that the map
\[
f_{\bE}: \bE_{Y, w} \to \bE_{X, f_V(w)}
\]
is $m_w(f)$-to-one.
\end{quote}
We often write $Y/X$ for this covering, making $f$ implicit.
\end{defn}

A vertex $w \in V_Y$ is said to be lying above $v \in V_X$ if $f_V(w) = v$.
The number $m_w(Y/X) = m_v(f)$ is referred to as the ramification index of $w$.
Naturally we say $w$ is unramified if $m_w(Y/X) = 1$, and a vertex $v$ of $X$ is unramified if all vertices of $Y$ lying above $v$ are unramified.
An unramified covering is by definition a covering such that any vertices are unramified.

Now we go on to the definition of Galois coverings.
The general definition does not explicitly written in \cite{GV}; instead, they only deal with derived graphs associated to voltage graphs (see \S \ref{ss:vol_graphs}).
In this paper, we introduce a reasonable definition of Galois coverings.
We will see that actually all of them can be constructed as derived graphs (Proposition \ref{prop:der_fin}).

\begin{defn}
Let $\Gamma$ be a finite group.
A $\Gamma$-covering $Y/X$ (or a Galois covering with Galois group $\Gamma$) is a morphism $f: Y \to X$ equipped with an action of $\Gamma$ on $Y$ satisfying the following:
\begin{itemize}
\item
The action of $\Gamma$ on $Y$ respects $f: Y \to X$, so the fibers $f_V^{-1}(v) \subset V_Y$ and $f_\bE^{-1}(e) \subset \bE_Y$ are $\Gamma$-stable for each $v \in V_X$ and $e \in \bE_X$.
\item
For each $v \in V_X$, the action of $\Gamma$ on $f_V^{-1}(v)$ is transitive.
\item
For each $e \in \bE_X$, the action of $\Gamma$ on $f_{\bE}^{-1}(e)$ is transitive and free.
\end{itemize}
\end{defn}

In other words, the action of $\Gamma$ on $\bE_Y$ is free and $f$ induces an isomorphism between the quotient graph $\Gamma \backslash Y$, defined in an obvious way, and $X$.

It is easy to see that a $\Gamma$-covering $Y/X$ is actually a covering (cf.~\cite[Proposition 3.3(1)]{GV}).
The ramification index $m_w(Y/X)$ coincides with the order of the stabilizer subgroup $\Stab_{\Gamma}(w)$ of $w$.
This depends only on the vertex $v = f_V(w) \in V_X$ since the action of $\Gamma$ on $f_V^{-1}(v)$ is transitive.
We then define the ramification index of $v \in V_X$ as $m_v(Y/X) = m_w(Y/X)$ by using any $w \in V_Y$ lying above $v$.

Finally, let us introduce the notion of infinite Galois coverings.

\begin{defn}
Let $\Gamma$ be a profinite group.
A $\Gamma$-covering of a finite graph $X$ is an inverse system $(X_U)_U$ of graphs, indexed by the open normal subgroups $U$ of $\Gamma$ (the order $\preceq$ is defined by $U \preceq V$ if $U \supset V$), equipped with an action of $\Gamma$ that induces a $\Gamma/U$-covering structure on $X_U/X$ for any $U$.
\end{defn}

For instance, for $\Gamma = \Z_p$, the open subgroups are precisely $\{\Gamma^{p^n}\}_{n \geq 0}$, so a $\Gamma$-covering of $X$ consists of a system of graphs
\[
X = X_0 \leftarrow X_1 \leftarrow X_2 \leftarrow \cdots
\]
such that $X_n/X$ is a $\Z/p^n\Z$-covering.

\subsection{Voltage graphs and derived graphs}\label{ss:vol_graphs}

To deal with (finite or infinite) Galois coverings, it is convenient to use voltage graphs and derived graphs.
See \cite[\S 5]{Kata_21} for the unramified cases, and \cite[\S 4]{GV} for the ramified cases.

Firstly, we introduce the voltage graphs.

\begin{defn}
A voltage graph $(X, \Gamma, \alpha)$ consists of a finite graph $X$, a group $\Gamma$, and a map $\alpha: \bE_X \to \Gamma$ satisfying
\[
\alpha(\ol{e}) = \alpha(e)^{-1}
\]
for any $e \in \bE_X$.
To handle the ramifications, we consider an extra structure $\cI = ( I_v \subset \Gamma)_{v \in V_X}$, which is a family of subgroups.
By abuse of notation, we also call $(X, \Gamma, \alpha, \cI)$ a voltage graph.
\end{defn}

\begin{defn}\label{defn:vol_funct}
Let $(X, \Gamma, \alpha, \cI)$ be a voltage graph.
Let $H$ be a normal subgroup of $\Gamma$.
Then the induced voltage graph $(X, \Gamma/H, \alpha_H, \cI_H)$ is defined as follows:
Letting $\pi: \Gamma \to \Gamma/H$ be the natural projection,
we define $\alpha_H = \pi \circ \alpha: \bE_X \to \Gamma/H$ and $\cI_H = (I_{v, H})_{v \in V_X}$ with $I_{v, H} = \pi(I_v) = I_v H/H \subset \Gamma/H$.
\end{defn}

Now we introduce the derived graphs.

\begin{defn}\label{defn:der_gr_fin}
Let $(X, \Gamma, \alpha, \cI)$ be a voltage graph with $\Gamma$ finite.
We construct a finite graph $X(\Gamma, \cI)$ (the map $\alpha$ is implicit), called the derived graph, as follows.
The set of vertices and edges are defined by
\[
V_{X(\Gamma, \cI)} = \coprod_{v \in V_X} (\Gamma/I_v \times \{v\}),
\quad
\bE_{X(\Gamma, \cI)} = \Gamma \times \bE_X.
\]
For an edge $(\gamma, e)$, we set
\[
s((\gamma, e)) = (\gamma I_{s(e)}, s(e)),
\quad
t((\gamma, e)) = (\gamma \alpha(e) I_{t(e)}, t(e)),
\quad
\ol{(\gamma, e)} = (\gamma \alpha(e), \ol{e}).
\]
\end{defn}

The group $\Gamma$ acts on the graph $X(\Gamma, \cI)$ from the left in the natural way.
We also have a natural morphism $X(\Gamma, \cI) \to X$ by projections to the second components.
It is quite easy to see that this morphism is indeed a $\Gamma$-covering.
More generally, given a normal subgroup $H$ of $\Gamma$, setting $(X, \Gamma/H, \alpha_H, \cI_H)$ as in Definition \ref{defn:vol_funct}, we have an $H$-covering $X(\Gamma, \cI) \to X(\Gamma/H, \cI_H)$.

\begin{defn}\label{defn:der_gr_infin}
Let $(X, \Gamma, \alpha, \cI)$ be a voltage graph with $\Gamma$ profinite.
For each open normal subgroup $U$ of $\Gamma$, we have a voltage graph $(X, \Gamma/U, \alpha_U, \cI_U)$ by Definition \ref{defn:vol_funct}, so we can construct the derived graph $X(\Gamma/U, \cI_U)$ by Definition \ref{defn:der_gr_fin}.
We write $X(\Gamma, \cI) = (X(\Gamma/U, \cI_U))_U$ for the $\Gamma$-covering of $X$ obtained in this way.
\end{defn}

\subsection{Galois coverings from voltage graphs}\label{ss:Gal_vol_graphs}

Now let us show that any Galois covering arises from a voltage graph.
We begin with the finite case.

\begin{prop}\label{prop:der_fin}
Let $f: Y \to X$ be a $\Gamma$-covering with $\Gamma$ finite.
Then there is a voltage graph structure $(X, \Gamma, \alpha, \cI)$ such that $Y \simeq X(\Gamma, \cI)$ as $\Gamma$-coverings of $X$.
\end{prop}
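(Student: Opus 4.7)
The plan is to build the voltage-graph structure $(X, \Gamma, \alpha, \cI)$ explicitly from $f \colon Y \to X$ by making compatible choices of base lifts, and then to exhibit an isomorphism $X(\Gamma, \cI) \simeq Y$ of $\Gamma$-coverings of $X$.

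First, for each $v \in V_X$, choose a lift $\tilde{v} \in f_V^{-1}(v)$ and set $I_v = \Stab_\Gamma(\tilde{v})$. Since $\Gamma$ acts transitively on $f_V^{-1}(v)$, the map $\gamma I_v \mapsto \gamma \tilde{v}$ is a well-defined $\Gamma$-equivariant bijection $\Gamma / I_v \to f_V^{-1}(v)$. Note that $|I_v|$ equals the ramification index $m_v(f)$, as desired. Next, for the edges, fix an orientation $S \subset \bE_X$, i.e., a subset containing exactly one element of each pair $\{e, \ol{e}\}$ (possible since the involution is fixed-point-free). For each $e \in S$, use the transitivity of $\Gamma$ on $f_\bE^{-1}(e)$ to choose a lift $\eta(e) \in f_\bE^{-1}(e)$ with $s(\eta(e)) = \tilde{s(e)}$. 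Since $t(\eta(e))$ lies above $t(e)$, we can write $t(\eta(e)) = \alpha(e) \cdot \tilde{t(e)}$ for some $\alpha(e) \in \Gamma$ (well defined modulo $I_{t(e)}$ on the right, which is harmless). Extend to $\bE_X \setminus S$ by declaring
\[
\alpha(\ol{e}) := \alpha(e)^{-1}, \qquad \eta(\ol{e}) := \alpha(e)^{-1} \cdot \ol{\eta(e)},
\]
so the voltage compatibility $\alpha(\ol{e}) = \alpha(e)^{-1}$ holds by construction, and one checks directly that $s(\eta(\ol{e})) = \alpha(e)^{-1} t(\eta(e)) = \tilde{t(e)} = \tilde{s(\ol{e})}$.

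With this data, define $\phi \colon X(\Gamma, \cI) \to Y$ by $\phi_V((\gamma I_v, v)) = \gamma \tilde{v}$ on vertices and $\phi_{\bE}((\gamma, e)) = \gamma \cdot \eta(e)$ on edges. A short verification using the source/target formulas in Definition \ref{defn:der_gr_fin} shows $\phi$ commutes with $s$, $t$, and the involution: for the involution, we have $\phi_{\bE}(\ol{(\gamma,e)}) = \phi_{\bE}((\gamma \alpha(e), \ol{e})) = \gamma \alpha(e) \cdot \eta(\ol{e}) = \gamma \cdot \ol{\eta(e)} = \ol{\phi_{\bE}((\gamma,e))}$ by the definition of $\eta(\ol{e})$. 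Equivariance with respect to the left $\Gamma$-action is immediate. Bijectivity on edges follows from the fact that $\Gamma$ acts freely and transitively on each fiber $f_{\bE}^{-1}(e)$, while bijectivity on vertices is the content of the first step. Thus $\phi$ is an isomorphism of $\Gamma$-coverings of $X$.

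The only subtlety is the interplay between the requirement $s(\eta(e)) = \tilde{s(e)}$ and the involution constraint, since these together over-determine the lifts if one tries to pick $\eta(e)$ and $\eta(\ol{e})$ independently. Fixing an orientation $S$ and forcing $\eta(\ol{e})$ to be dictated by $\eta(e)$ via $\alpha(e)$ is precisely what dissolves this obstacle; everything else is a direct unravelling of the definitions.
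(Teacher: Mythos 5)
Your proof is correct and follows essentially the same strategy as the paper's: choose vertex lifts to define the inertia groups $I_v$, choose edge lifts with prescribed source to define the voltages, and write down the same explicit isomorphism $\phi$. The only cosmetic difference is that the paper lifts $e$ and $\ol{e}$ independently and defines $\alpha(e)$ by comparing $\ol{\wtil{e}}$ with $\wtil{\ol{e}}$ inside the free edge fiber, so that $\alpha(e)$ is pinned down uniquely rather than only modulo $I_{t(e)}$, whereas you fix an orientation and force the lift of $\ol{e}$; both devices resolve the same over-determination issue you flag at the end.
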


\begin{proof}
We construct a voltage graph $(X, \Gamma, \alpha, \cI)$ in the following way.
The construction depends on two choices:
\begin{itemize}
\item
For each $v \in V_X$, choose a vertex $\wtil{v} \in V_Y$ such that $f_V(\wtil{v}) = v$.
\item
For each $e \in \bE_X$, choose an edge $\wtil{e} \in \bE_Y$ such that $f_{\bE}(\wtil{e}) = e$ and $s(\wtil{e}) = \wtil{s(e)}$.
\end{itemize}
Then we define $\alpha$ and $\cI = (I_v)_{v \in V_X}$ as follows:
\begin{itemize}
\item
Let $\alpha(e) \in \Gamma$ be the unique element such that $\alpha(e) \cdot \wtil{\ol{e}} = \ol{\wtil{e}}$.
\item
Let $I_v \subset \Gamma$ be the stabilizer of $\wtil{v}$.
\end{itemize}
Note that $\alpha(e)$ exists uniquely because both $\wtil{\ol{e}}$ and $\ol{\wtil{e}}$ are in the fiber of $\ol{e}$ with respect to $f$ and the action of $\Gamma$ on such a fiber is free and transitive.
Then we have $t(\wtil{e}) = \alpha(e) \wtil{t(e)}$ by
\[
t(\wtil{e}) = s(\ol{\wtil{e}}) = s(\alpha(e) \cdot \wtil{\ol{e}}) = \alpha(e) s(\wtil{\ol{e}}) =\alpha(e) \wtil{s(\ol{e})} = \alpha(e) \wtil{t(e)}.
\]

We construct a morphism $\phi = (\phi_V, \phi_{\bE}): X(\Gamma, \cI) \to Y$ by
\[
\phi_V((\gamma I_v, v)) = \gamma \wtil{v},
\quad
\phi_{\bE}((\gamma, e)) = \gamma \wtil{e}.
\]
The well-definedness of $\phi_V$ follows from the definition of $I_v$.
It is straightforward to see that $\phi$ is indeed a morphism:
\[
\phi_V(s(\gamma, e)) 
= \phi_V((\gamma I_{s(e)}, s(e)))
= \gamma \wtil{s(e)} 
= \gamma s(\wtil{e}) 
= s(\gamma \wtil{e})
= s(\phi_{\bE}((\gamma, e))),
\]
\[
\phi_V(t(\gamma, e)) 
= \phi_V((\gamma \alpha(e) I_{t(e)}, t(e)))
= \gamma \alpha(e) \wtil{t(e)} 
= \gamma t(\wtil{e}) 
= t(\gamma \wtil{e})
= t(\phi_{\bE}((\gamma, e))),
\]
and
\[
\ol{\phi_{\bE}((\gamma, e))} 
= \ol{\gamma \wtil{e}}
= \gamma \ol{\wtil{e}}
= \gamma \alpha(e) \wtil{\ol{e}}
= \phi_{\bE}((\gamma \alpha(e), \ol{e}))
= \phi_{\bE}(\ol{(\gamma, e)}).
\]
Moreover, by the definition of the $\Gamma$-coverings, both $\phi_V$ and $\phi_{\bE}$ are bijective, so $\phi$ is an isomorphism.
\end{proof}

\begin{prop}\label{prop:der_infin}
Let $\Gamma$ be a profinite group and $(X_U)_U$ be a $\Gamma$-covering of $X$.
Then there is a voltage graph structure $(X, \Gamma, \alpha, \cI)$ such that $(X_U)_U$ is isomorphic to $X(\Gamma, \cI)$ as $\Gamma$-coverings of $X$.
\end{prop}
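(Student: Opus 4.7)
The plan is to reduce to the finite case (Proposition \ref{prop:der_fin}) by making compatible choices of lifts across all open normal subgroups $U$ of $\Gamma$. For each such $U$, the induced covering $X_U/X$ is a $\Gamma/U$-covering of finite graphs, so Proposition \ref{prop:der_fin} already yields a voltage graph $(X, \Gamma/U, \alpha_U, \cI_U)$ with $X(\Gamma/U, \cI_U) \simeq X_U$. The obstacle is that this construction depends on a choice of vertex and edge lifts, which a priori are unrelated as $U$ varies.

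To overcome this, I fix coherent choices of lifts by compactness. For each $v \in V_X$, the fibers $\{f_{V, U}^{-1}(v)\}_U$ form an inverse system (over the transition maps $X_V \to X_U$ for $V \subset U$) of finite non-empty sets --- non-empty by transitivity of the $\Gamma/U$-action --- so the inverse limit is non-empty, and I pick a coherent family $(\wtil{v}_U)_U$ in it. Similarly, for each $e \in \bE_X$, the sets $\{\wtil{e} \in f_{\bE, U}^{-1}(e) : s(\wtil{e}) = \wtil{s(e)}_U\}$ form an inverse system of finite non-empty sets (non-empty because the stabilizer of $\wtil{s(e)}_U$ acts freely and transitively on this subset), from which I select a coherent family $(\wtil{e}_U)_U$.

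Applying the construction in the proof of Proposition \ref{prop:der_fin} at each level $U$ with these coherent choices produces voltage data $(\alpha_U, \cI_U)$ that are compatible under the projections $\Gamma/V \twoheadrightarrow \Gamma/U$. I then set $\alpha(e) = (\alpha_U(e))_U \in \varprojlim_U \Gamma/U = \Gamma$ and $I_v = \varprojlim_U I_{v, U}$, a closed subgroup of $\Gamma$. Since inverse limits of finite non-empty sets project surjectively onto each level, one has $I_v U/U = I_{v, U}$, so the voltage graph $(X, \Gamma, \alpha, \cI)$ induces exactly $(X, \Gamma/U, \alpha_U, \cI_U)$ at each level in the sense of Definition \ref{defn:vol_funct}. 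The isomorphisms $\phi_U \colon X(\Gamma/U, \cI_U) \simeq X_U$ from Proposition \ref{prop:der_fin}, being defined in terms of the compatible lifts, commute with the transition maps and assemble into the desired $\Gamma$-equivariant isomorphism $X(\Gamma, \cI) \simeq (X_U)_U$. The only real difficulty is arranging the coherent lifts, which is handled by the compactness step above.
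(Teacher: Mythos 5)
Your proof is correct and follows essentially the same route as the paper: choose vertex and edge lifts compatibly across all levels $U$, run the finite-case construction of Proposition \ref{prop:der_fin} at each level, and pass to the limit to get $\alpha$ and $\cI$. The only difference is that you spell out the compactness argument (inverse limits of finite non-empty sets) guaranteeing that compatible lifts exist, a point the paper's proof leaves implicit.
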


\begin{proof}
As in the proof of Proposition \ref{prop:der_fin}, we firstly make two choices:
\begin{itemize}
\item
For each vertex $v$ of $X$ and $U \subset \Gamma$, choose a vertex $\wtil{v}^U$ of $X_U$ that goes to $v$.
Moreover, we impose the compatibility with respect to $U$, that is, $\wtil{v}^V$ goes to $\wtil{v}^U$ whenever $U \supset V$.
\item
For each edge $e$ of $X$, choose an edge $\wtil{e}^U$ of $X_U$ that goes to $e$ and satisfies $s(\wtil{e}^U) = \wtil{s(e)}^U$.
Moreover, we impose the compatibility with respect to $U$, that is, $\wtil{e}^V$ goes to $\wtil{e}^U$ whenever $U \supset V$.
\end{itemize}
Then, for each $U$, we define $\alpha_U: \bE_X \to \Gamma/U$ and $I_{v, U} \subset \Gamma/U$ as in the proof of Proposition \ref{prop:der_fin} applied to $X_U/X$.
It is straightforward to see that $\alpha_U$ and $I_{v, U}$ are compatible with respect to $U$, so we may define $\alpha: \bE_X \to \Gamma$ and $I_v \subset \Gamma$ as the limits.
By the proof of Proposition \ref{prop:der_fin}, for any $U$, we have $X_U \simeq X(\Gamma/U, \cI_U)$ as $\Gamma/U$-coverings.
Therefore, $(X_U)_U \simeq X(\Gamma, \cI)$ as $\Gamma$-coverings.
\end{proof}

\section{Functorialities of Picard groups}\label{sec:funct}

The behavior of the Picard groups with respect to coverings plays important roles in the proof of Theorem \ref{thm:main_A}.
We first consider general coverings in \S \ref{ss:cov_funct}.
Then we will consider finite and infinite Galois coverings in \S \ref{ss:fin_funct} and \S \ref{ss:profin}, respectively.

\subsection{Coverings}\label{ss:cov_funct}

Let $f: Y \to X$ be a covering (Definition \ref{defn:cov}) between finite connected graphs.
We define the degree of the covering $Y/X$ as the number
\[
[Y: X] := \sum_{w \in f_V^{-1}(v)} m_w(f),
\]
which is independent of $v \in V_X$ (see \cite[\S 3.1]{GV}; here we need the connectedness of $X$).
Note that, for any edge $e \in \bE_X$, we have
\[
[Y: X] = \# (f_{\bE}^{-1}(e)),
\]
that is, the fiber of an arbitrary edge of $X$ consists of $[Y: X]$ edges of $Y$.

As in \cite[\S 3.2]{GV}, let
\[
f_*, f_{\sfr}: \Div(Y) \to \Div(X)
\]
be the $\Z$-homomorphisms such that
\[
f_*([w]) = [f_V(w)],
\quad
f_{\sfr}([w]) = m_w(f) [f_V(w)]
\]
for each $w \in V_Y$.

\begin{prop}\label{prop:funct_cov}
The exact sequences \eqref{eq:Pic_defn} for $X$ and $Y$ satisfy a  commutative diagram
\begin{equation}\label{eq:Pic_func}
\xymatrix{
	0 \ar[r]
	& \Z \ar[r]^-{\iota_X} \ar[d]_{[Y: X]}
	& \Div(Y) \ar[r]^{\cL_{Y}} \ar[d]_{f_{\sfr}}
	& \Div(Y) \ar@{->>}[d]^{f_*} \ar[r]
	& \Pic(Y) \ar[r] \ar@{->>}[d]
	& 0 \\
	0 \ar[r]
	& \Z \ar[r]_-{\iota_Y}
	& \Div(X) \ar[r]_{\cL_X} 
	& \Div(X) \ar[r] 
	& \Pic(X) \ar[r]
	& 0,
}
\end{equation}
where the rightmost vertical map between the Picard groups is induced by $f_*$.
\end{prop}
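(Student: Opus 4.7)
The proof is a direct verification that each of the three squares in \eqref{eq:Pic_func} commutes, and that the induced map on cokernels makes the rightmost square meaningful. The exactness of the two rows is already recorded in \eqref{eq:Pic_defn} applied to $X$ and $Y$ separately, so the only content is commutativity.

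First, I would handle the leftmost square. Unwinding $\iota_Y(1) = \sum_{w \in V_Y}[w]$, applying $f_{\sfr}$, and grouping $V_Y$ according to fibers of $f_V$ gives
\[
f_{\sfr}(\iota_Y(1)) = \sum_{v \in V_X} \Bigl( \sum_{w \in f_V^{-1}(v)} m_w(f) \Bigr) [v] = [Y:X] \sum_{v \in V_X} [v],
\]
using precisely the definition of $[Y:X]$ recalled just above the proposition. This equals $[Y:X]\cdot \iota_X(1)$, so the square commutes.

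The middle square is the main computation. Fix $w \in V_Y$ and set $v = f_V(w)$. On the one hand,
\[
\cL_X(f_{\sfr}([w])) = m_w(f) \sum_{e \in \bE_{X,v}}\bigl([v] - [t(e)]\bigr).
\]
On the other hand, using that $f$ is a morphism of graphs (so $f_V(t(e')) = t(f_{\bE}(e'))$ for every $e' \in \bE_{Y,w}$),
\[
f_*(\cL_Y([w])) = \sum_{e' \in \bE_{Y,w}} \bigl([v] - [t(f_{\bE}(e'))]\bigr).
\]
Here the key input is Definition \ref{defn:cov}: the restriction $f_{\bE} : \bE_{Y,w} \to \bE_{X,v}$ is $m_w(f)$-to-one. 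Grouping the sum over $e' \in \bE_{Y,w}$ according to its image $e = f_{\bE}(e') \in \bE_{X,v}$ therefore produces exactly $m_w(f)$ copies of each term, and the two expressions agree. So $\cL_X \circ f_{\sfr} = f_* \circ \cL_Y$.

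Finally, the rightmost square is essentially formal: since $f_* \circ \cL_Y = \cL_X \circ f_{\sfr}$ lies in the image of $\cL_X$, the map $f_* : \Div(Y) \to \Div(X)$ descends to a well-defined map $\Pic(Y) \to \Pic(X)$, and the square commutes by the very definition of this induced map. I do not foresee any real obstacle; the only place where the covering hypothesis (as opposed to a general morphism of graphs) is used is in the $m_w(f)$-to-one property needed for the middle square.
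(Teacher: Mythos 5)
Your proposal is correct and follows essentially the same route as the paper: the left square via grouping $\sum_{w\in V_Y} m_w(f)[f_V(w)]$ over fibers and invoking the definition of $[Y:X]$, and the middle square via the same two computations of $\cL_X\circ f_{\sfr}$ and $f_*\circ\cL_Y$ combined with the $m_w(f)$-to-one property of $f_{\bE}:\bE_{Y,w}\to\bE_{X,v}$. The paper leaves the rightmost (formal) square implicit, but your remark about the induced map on cokernels is exactly what is meant.
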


\begin{proof}
This proposition follows from direct computations.
The commutativity of the left one follows from
\[
f_{\sfr} \left( \sum_{w \in V_Y} [w] \right)
= \sum_{w \in V_Y} m_w(f)[f(w)]
= \sum_{v \in V_X}  \left( \sum_{w \in f_V^{-1}(v)} m_w(f) \right) [v]
= [Y: X] \sum_{v \in V_X} [v].
\]
The commutativity of the middle square is \cite[Proposition 3.4]{GV}.
Indeed, for each $w \in V_Y$, setting $v = f_V(w) \in V_X$, we see
\[
f_* \circ \cL_Y([w])
= f_* \left(\sum_{e \in \bE_{Y, w}} ([w] - [t(e)]) \right)
= \sum_{e \in \bE_{Y, w}} ([f_V(w)] - [f_V(t(e))])
= \sum_{e \in \bE_{Y, w}} ([v] - [t(f_{\bE}(e))])
\]
and
\[
\cL_X \circ f_{\sfr}([w])
 = m_w(f) \cL_X([v])
= m_w(f) \sum_{e \in \bE_{X, v}} ([v] - [t(e)]).
\]
Since $f_{\bE}: \bE_{Y, w} \to \bE_{X, v}$ is $m_w(f)$-to-one, these coincide.
\end{proof}

\subsection{Finite Galois coverings}\label{ss:fin_funct}

Let $(X, \Gamma, \alpha, \cI)$ be a voltage graph with $\Gamma$ finite.

Set $\Z[\Gamma/I_v] = \Z[\Gamma] \otimes_{\Z[I_v]} \Z$, which we regard as a left $\Z[\Gamma]$-module.
Then we may identify
\begin{equation}\label{eq:Div_d}
\Div(X(\Gamma, \cI)) 
\simeq \bigoplus_{v \in V_X} \Z[\Gamma/I_v] [v]
\simeq \bigoplus_{v \in V_X} \Z[\Gamma/I_v]
\end{equation}
as $\Z[\Gamma]$-modules.
Here, the first isomorphism is given by $(\gamma I_v, v) \leftrightarrow \gamma [v]$ and the second by simply omitting $[v]$ to ease the notation.
Then, associated to the endomorphism $\cL_{X(\Gamma, \cI)}$ on $\Div(X(\Gamma, \cI))$, we introduce an endomorphism $\cL_{X, \Gamma, \cI}$ on $\bigoplus_{v \in V_X} \Z[\Gamma/I_v] [v]$ (or $\bigoplus_{v \in V_X} \Z[\Gamma/I_v]$).
In other words, $\cL_{X, \Gamma, \cI}$ is defined by a commutative diagram
\[
\xymatrix{
	\Div(X(\Gamma, \cI)) \ar[r]^{\cL_{X(\Gamma, \cI)}} \ar[d]_{\simeq}
	& \Div(X(\Gamma, \cI)) \ar[d]^{\simeq}\\
	\bigoplus_{v \in V_X} \Z[\Gamma/I_v] [v] \ar[r]_{\cL_{X, \Gamma, \cI}}
	& \bigoplus_{v \in V_X} \Z[\Gamma/I_v] [v].
}
\]

\begin{rem}
It is straightforward to see that $\cL_{X, \Gamma, \cI}$ may be explicitly defined by 
\[
\cL_{X, \Gamma, \cI}([v]) 
= N_{I_v} \cdot \sum_{e \in \bE_{X, v}} \left([v] - \alpha(e) [t(e)] \right),
\quad
v \in V_X.
\]
Here, $N_{I_v} = \sum_{\sigma \in I_v} \sigma \in \Z[I_v]$ denotes the norm element of $I_v$.
However, this explicit formula will not be used.
\end{rem}

\begin{prop}\label{prop:funct_fin_Gal}
Let $(X, \Gamma, \alpha, \cI)$ be a voltage graph with $\Gamma$ finite.
Suppose that $X(\Gamma, \cI)$ is connected (so $X$ is also connected).

\begin{itemize}
\item[(1)]
We have an exact sequence
\[
0 \to \Z \to \bigoplus_{v \in V_X} \Z[\Gamma/I_v] 
\overset{\cL_{X, \Gamma, \cI}}{\to} \bigoplus_{v \in V_X} \Z[\Gamma/I_v] 
\to \Pic(X(\Gamma, \cI)) \to 0.
\]
\item[(2)]
Let $H$ be a normal subgroup of $\Gamma$.
We introduce a voltage graph $(X, \Gamma/H, \alpha_H, \cI_H)$ as in Definition \ref{defn:vol_funct}.
Then we have a commutative diagram
\begin{equation}\label{eq:Pic_func_3}
\xymatrix{
	0 \ar[r]
	& \Z \ar[r] \ar[d]_{\# H}
	& \bigoplus_{v \in V_X} \Z[\Gamma/I_v] \ar[r]^{\cL_{X, \Gamma, \cI}} \ar[d]_{\beta}
	& \bigoplus_{v \in V_X} \Z[\Gamma/I_v] \ar@{->>}[d]^{\proj} \ar[r]
	& \Pic(X(\Gamma, \cI)) \ar[r] \ar@{->>}[d]
	& 0 \\
	0 \ar[r]
	& \Z \ar[r]
	& \bigoplus_{v \in V_X} \Z[\Gamma/I_v H] \ar[r]_{\cL_{X, \Gamma/H, \cI_H}} 
	& \bigoplus_{v \in V_X} \Z[\Gamma/I_v H] \ar[r] 
	& \Pic(X(\Gamma/H, \cI_H)) \ar[r]
	& 0.
}
\end{equation}
Here, the map $\proj$ is the natural projection, and the map $\beta$ sends $1 \in \Z[\Gamma/I_v]$ to $\#(H \cap I_v) \in \Z[\Gamma/I_v H]$.
\end{itemize}
\end{prop}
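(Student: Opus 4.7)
\bigskip

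\noindent\textbf{Proof proposal.}
My plan is to deduce both statements from the fundamental exact sequence \eqref{eq:Pic_defn} and the functoriality result of Proposition \ref{prop:funct_cov}, after carefully translating everything through the identification \eqref{eq:Div_d}.

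For part (1), I would just apply \eqref{eq:Pic_defn} to the connected finite graph $Y = X(\Gamma, \cI)$ and rewrite $\Div(Y)$ via \eqref{eq:Div_d}. The middle map becomes $\cL_{X,\Gamma,\cI}$ by its very definition, and the leftmost map $\iota_Y$ sends $1$ to $\sum_{v} N_{\Gamma/I_v}$, where $N_{\Gamma/I_v} = \sum_{\gamma I_v \in \Gamma/I_v} \gamma I_v \in \Z[\Gamma/I_v]$. This part is essentially bookkeeping.

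For part (2), the key observation is that the natural morphism $f \colon X(\Gamma, \cI) \to X(\Gamma/H, \cI_H)$ is an $H$-covering (as noted after Definition \ref{defn:der_gr_fin}), so Proposition \ref{prop:funct_cov} gives a diagram of the required shape; the issue is to identify the three vertical maps $[Y:X]$, $f_{\sfr}$, and $f_*$ with $\#H$, $\beta$, and $\proj$, respectively, under \eqref{eq:Div_d}. The degree $[X(\Gamma, \cI) : X(\Gamma/H, \cI_H)]$ equals $\#H$ since $f$ is an $H$-covering. For the rightmost translation, the map $f_V$ sends $(\gamma I_v, v)$ to $(\gamma I_v H, v)$, which is exactly the natural projection $\Z[\Gamma/I_v] \twoheadrightarrow \Z[\Gamma/I_v H]$, so $f_*$ becomes $\proj$.

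The main computation — and what I expect to be the only substantive step — is identifying $f_{\sfr}$ with $\beta$. This requires computing the ramification index at each vertex $w = (\gamma I_v, v)$ of $X(\Gamma,\cI)$. Since $f$ is an $H$-covering, $m_w(f)$ equals the order of the stabilizer $\Stab_H(w)$, and a direct computation shows
\[
\Stab_H((\gamma I_v, v)) = H \cap \gamma I_v \gamma^{-1}.
\]
Because $H$ is normal in $\Gamma$, conjugation by $\gamma$ gives $H \cap \gamma I_v \gamma^{-1} = \gamma(H \cap I_v)\gamma^{-1}$, so $m_w(f) = \#(H \cap I_v)$, independent of the choice of $\gamma$. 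Therefore $f_{\sfr}$ sends $\gamma \in \Z[\Gamma/I_v]$ to $\#(H \cap I_v)\cdot \gamma \in \Z[\Gamma/I_v H]$, which is exactly $\beta$. The left-hand square then commutes because $\iota_Y$ and $\iota_{Y'}$ send $1$ to $\sum_v N_{\Gamma/I_v}$ and $\sum_v N_{\Gamma/I_v H}$ respectively, and $f_{\sfr}$ applied to $N_{\Gamma/I_v}$ multiplies by $\#(H\cap I_v) \cdot [\Gamma/I_v : \Gamma/I_v H]^{-1} \cdot$ (number of preimages) — more cleanly, commutativity is automatic from Proposition \ref{prop:funct_cov} once the identifications are pinned down.
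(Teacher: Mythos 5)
Your proposal is correct and follows essentially the same route as the paper: part (1) is the reformulation of \eqref{eq:Pic_defn} under the identification \eqref{eq:Div_d}, and part (2) is Proposition \ref{prop:funct_cov} applied to the $H$-covering $f\colon X(\Gamma,\cI)\to X(\Gamma/H,\cI_H)$ after matching $f_{\sfr}$ with $\beta$ and $f_*$ with $\proj$. Your explicit stabilizer computation $\Stab_H((\gamma I_v,v))=H\cap\gamma I_v\gamma^{-1}=\gamma(H\cap I_v)\gamma^{-1}$ is exactly the detail the paper leaves as ``quite easy to see,'' and your closing remark rightly notes that the left square's commutativity is already supplied by Proposition \ref{prop:funct_cov} (the garbled multiplier formula there is harmless since you discard it).
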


\begin{proof}
(1)
This is simply a reformulation of \eqref{eq:Pic_defn}.

(2)
Let $f: X(\Gamma, \cI) \to X(\Gamma/H, \cI_H)$ be the $H$-covering.
It is quite easy to see that the following diagrams commute:
\[
\xymatrix{
	\bigoplus_{v \in V_X} \Z[\Gamma/I_v] \ar[r]^{\simeq} \ar[d]_{\beta}
	& \Div(X(\Gamma, \cI)) \ar[d]^{f_{\sfr}}\\
	\bigoplus_{v \in V_X} \Z[\Gamma/I_v H] \ar[r]_{\simeq}
	& \Div(X(\Gamma/H, \cI_H)),
}
\quad
\xymatrix{
	\bigoplus_{v \in V_X} \Z[\Gamma/I_v] \ar[r]^{\simeq} \ar@{->>}[d]_{\proj}
	& \Div(X(\Gamma, \cI)) \ar[d]^{f_*}\\
	\bigoplus_{v \in V_X} \Z[\Gamma/I_v H] \ar[r]_{\simeq}
	& \Div(X(\Gamma/H, \cI_H)),
}
\]
where the horizontal isomorphisms are \eqref{eq:Div_d}.
Then the claimed digram is simply a reformulation of Proposition \ref{prop:funct_cov} applied to $f$.
\end{proof}

This description of $\beta$ in this proposition will be crucial in what follows.

\subsection{Infinite Galois coverings}\label{ss:profin}

As in \cite[\S 5.2]{Kata_21}, we shall define the Picard group of an infinite Galois covering by using the projective limit.
Let $\Lambda$ be a compact flat $\Z$-algebra; in the applications we will set $\Lambda = \Z_p$.
This $\Lambda$ will be used to make the projective limit functor exact.

\begin{defn}\label{defn:Pic_inf}
Let $(X, \Gamma, \alpha, \cI)$ be a voltage graph with $\Gamma$ profinite.
Suppose that $X(\Gamma/U, \cI_U)$ are connected for all $U$.
By Proposition \ref{prop:funct_fin_Gal}(2), the Picard groups $\Pic(X(\Gamma/U, \cI_U))$ form a projective system.
We define
\[
\Pic_{\Lambda}(X(\Gamma, \cI)) = \varprojlim_U \Lambda \otimes_{\Z} \Pic(X(\Gamma/U, \cI_U)).
\]
This is a module over the Iwasawa algebra $\Lambda[[\Gamma]] = \varprojlim_U \Lambda[\Gamma/U]$.
\end{defn}

For any profinite group $\Gamma$, we define an ideal ${}_{\Gamma} \Lambda$ of $\Lambda$ by
\[
{}_{\Gamma} \Lambda = \bigcap_U [\Gamma: U] \Lambda,
\]
where $U$ runs over the open normal subgroups of $\Gamma$.
For instance, if $\Lambda = \Z_p$, we have ${}_{\Gamma} \Lambda = 0$ if and only if the order of $\Gamma$ is divisible by $p^{\infty}$.

\begin{prop}\label{prop:funct_infin_Gal}
Let $(X, \Gamma, \alpha, \cI)$ be a voltage graph with $\Gamma$ profinite.
Suppose that $X(\Gamma/U, \cI_U)$ are connected for all $U$.
\begin{itemize}
\item[(1)]
We have an exact sequence
\begin{equation}\label{eq:Pic_func_5}
0 
\to {}_{\Gamma} \Lambda
\to \bigoplus_{v \in V_X} {}_{I_v} \Lambda[[\Gamma/I_v]] \overset{\cL_{X, \Gamma, \cI}}
\to \bigoplus_{v \in V_X} \Lambda[[\Gamma/I_v]]
\to \Pic_{\Lambda}(X(\Gamma, \cI)) \to 0,
\end{equation}
where $\cL_{X, \Gamma, \cI}$ is introduced in the proof below.
\item[(2)]
Let $H$ be a closed normal subgroup of $\Gamma$.
Let $(X, \Gamma/H, \alpha_H, \cI_H)$ be as in Definition \ref{defn:vol_funct}.
Then we have a commutative diagram
\begin{equation}\label{eq:Pic_func_2}
\small
\xymatrix{
	0 \ar[r]
	& {}_{\Gamma} \Lambda \ar[r] \ar[d]
	& \bigoplus_{v \in V_X} {}_{I_v} \Lambda[[\Gamma/I_v]] \ar[r]^{\cL_{X, \Gamma, \cI}} \ar[d]
	& \bigoplus_{v \in V_X} \Lambda[[\Gamma/I_v]] \ar@{->>}[d] \ar[r]
	& \Pic_{\Lambda}(X(\Gamma, \cI)) \ar[r] \ar@{->>}[d]
	& 0 \\
	0 \ar[r]
	& {}_{\Gamma/H} \Lambda \ar[r]
	& \bigoplus_{v \in V_X} {}_{I_v H} \Lambda[[\Gamma/I_v H]] \ar[r]_{\cL_{X, \Gamma/H, \cI_H}} 
	& \bigoplus_{v \in V_X} \Lambda[[\Gamma/I_v H]] \ar[r] 
	& \Pic_{\Lambda}(X(\Gamma/H, \cI_H)) \ar[r]
	& 0. \\
}
\end{equation}
Here, all the vertical arrows are the natural ones:
The left two ones are induced by the inclusions ${}_{\Gamma} \Lambda \hookrightarrow {}_{\Gamma/H} \Lambda$ and ${}_{I_v} \Lambda \hookrightarrow {}_{I_vH} \Lambda$.
\end{itemize}
\end{prop}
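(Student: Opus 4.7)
The plan is to obtain both statements by passing to the inverse limit, over the open normal subgroups $U$ of $\Gamma$, of the finite-level assertions in Proposition~\ref{prop:funct_fin_Gal}, and identifying each of the four resulting limit terms.

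First, for each $U$ I would apply Proposition~\ref{prop:funct_fin_Gal}(1) to $(X, \Gamma/U, \alpha_U, \cI_U)$ and tensor with $\Lambda$ over $\Z$; flatness of $\Lambda$ preserves exactness. Proposition~\ref{prop:funct_fin_Gal}(2), applied to $(X, \Gamma/V, \alpha_V, \cI_V)$ with the subgroup $U/V$ playing the role of $H$ for each nested pair $V \subset U$, furnishes transition maps assembling the level-$U$ sequences into an inverse system of four-term exact sequences. Every term is finitely generated over the compact ring $\Lambda$, hence compact; since $V_X$ is finite the direct sums stay compact; and on compact $\Lambda$-modules the inverse limit functor is exact (the Mittag--Leffler condition holds automatically), so a four-term exact sequence survives in the limit.

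I would then identify each limit term. The rightmost is $\Pic_\Lambda(X(\Gamma, \cI))$ by definition. The middle-right transitions are the natural projections, so the limit equals $\bigoplus_v \Lambda[[\Gamma/I_v]]$, using $\bigcap_U I_v U = I_v$. The leftmost transition $\Lambda \to \Lambda$ is multiplication by $[U:V]$, so a compatible system $(\lambda_U)_U$ forces $\lambda_\Gamma \in [\Gamma:V]\Lambda$ for every $V$, giving $\lambda_\Gamma \in {}_\Gamma\Lambda$, while torsion-freeness of $\Lambda$ lets any $\lambda \in {}_\Gamma\Lambda$ recover the system uniquely. For the middle-left, the transition $\beta$ is multiplication by $\#((U/V) \cap I_{v,V}) = [I_v \cap U : I_v \cap V]$ composed with the natural projection; using the cofinality of $\{I_v \cap U\}_U$ among open normal subgroups of $I_v$, the limit is naturally realized as $\bigoplus_v {}_{I_v}\Lambda[[\Gamma/I_v]]$ sitting inside $\bigoplus_v \Lambda[[\Gamma/I_v]]$ via the scaling that encodes the norm factor $N_{I_v}$ in the Laplacian. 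The limit of the maps $\cL_U$ is then the asserted $\cL_{X,\Gamma,\cI}$, proving part~(1).

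Part~(2) follows by applying the same limit procedure to the morphism of inverse systems: the level-$U$ diagrams of Proposition~\ref{prop:funct_fin_Gal}(2) are compatible with the transitions, and restricting to $U \subset H$ (which is cofinal) gives a morphism from the $\Gamma$-system to the $\Gamma/H$-system whose limit is the claimed diagram~\eqref{eq:Pic_func_2}. The left two vertical arrows become the ideal inclusions ${}_\Gamma\Lambda \hookrightarrow {}_{\Gamma/H}\Lambda$ and ${}_{I_v}\Lambda \hookrightarrow {}_{I_v H}\Lambda$ (genuine inclusions, since the target intersections run over fewer subgroups), while the right two are the canonical surjections on Iwasawa algebras and Picard groups. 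The principal obstacle will be the middle-left identification: bookkeeping the non-natural $\beta$-transitions to extract precisely the sub-module $\bigoplus_v {}_{I_v}\Lambda[[\Gamma/I_v]]$, which requires careful use of the indices $[I_v \cap U : I_v \cap V]$ and of the $\Z$-torsion-freeness of $\Lambda$ for unambiguous division.
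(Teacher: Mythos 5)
Your proposal follows essentially the same route as the paper's proof: tensor the finite-level four-term exact sequences of Proposition~\ref{prop:funct_fin_Gal} with the flat algebra $\Lambda$, assemble them into an inverse system via the transition maps of Proposition~\ref{prop:funct_fin_Gal}(2), pass to the limit using exactness on compact modules, and identify the four limit terms, with the middle-left term pinned down by the transition factor $[I_v\cap U:I_v\cap V]=[U\cap I_vV:V]$ (these agree by the modular law) together with $\Z$-torsion-freeness of $\Lambda$. The only slip is in part (2): the relevant restriction is to open normal subgroups $U \supset H$ (these index the $\Gamma/H$-system), not $U \subset H$ --- the latter collection is empty when $H$ is not open --- but with that correction your argument coincides with the paper's.
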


\begin{proof}
By Proposition \ref{prop:funct_fin_Gal}(2), for open normal subgroups $V, U$ of $\Gamma$ with $U \supset V$, we have
\begin{equation}\label{eq:Pic_func_4}
\xymatrix{
	0 \ar[r]
	& \Z \ar[r] \ar[d]_{[U: V]}
	& \bigoplus_{v \in V_X} \Z[\Gamma/I_v V] \ar[r]^{\cL_{X, \Gamma/V, \cI_V}} \ar[d]_{\beta}
	& \bigoplus_{v \in V_X} \Z[\Gamma/I_v V] \ar@{->>}[d]^{\proj} \ar[r]
	& \Pic(X(\Gamma/V, \cI_V)) \ar[r] \ar@{->>}[d]
	& 0 \\
	0 \ar[r]
	& \Z \ar[r]
	& \bigoplus_{v \in V_X} \Z[\Gamma/I_v U] \ar[r]_{\cL_{X, \Gamma/U, \cI_U}} 
	& \bigoplus_{v \in V_X} \Z[\Gamma/I_v U] \ar[r] 
	& \Pic(X(\Gamma/U, \cI_U)) \ar[r]
	& 0.
}
\end{equation}
Here, $\beta$ on the $v$-component sends $1$ to
\[
\# ((U/V) \cap (I_v V/V)) = [U \cap I_v V: V].
\]
By taking the tensor product with $\Lambda$ over $\Z$ and then taking the projective limit with respect to $U$, we obtain claim (1).
For claim (2), we only have to take the limit of the above diagram (after $\Lambda \otimes_{\Z}(-)$) with the restriction $U \supset H$.
\end{proof}

\section{Kida's formula}\label{sec:main}

In \S \ref{ss:ICNF}, we review the Iwasawa class number formula in our settings.
Then in \S \ref{ss:Kida_pf}, we prove our main theorem (Theorem \ref{thm:main_A}).

\subsection{Iwasawa class number formula}\label{ss:ICNF}

From now on, we fix a prime number $p$ and work over $\Lambda = \Z_p$.

Let $\Gamma$ be a profinite group that is isomorphic to $\Z_p$.
For a finitely generated torsion $\Z_p[[\Gamma]]$-module $M$, we have the $\lambda$, $\mu$-invariants $\lambda(M)$, $\mu(M)$, which are non-negative integers.
They are usually defined by using the structure theorem for $\Z_p[[\Gamma]]$-modules (see Neukirch--Schmidt--Wingberg \cite[Definition (5.3.9)]{NSW08}).
Instead of the usual definition, we only recall the following properties:
\begin{itemize}
\item
We have $\mu(M) = 0$ if and only if $M$ is finitely generated over $\Z_p$.
\item
We have $\lambda(M) = \dim_{\Q_p} (\Q_p \otimes_{\Z_p} M)$.
\end{itemize}

Now the Iwasawa class number formula is the following:

\begin{thm}[{Gambheera--Valli\'{e}res \cite[Theorem A]{GV}}]\label{thm:ICNF}
Let $X_{\infty}/X$ be a $\Z_p$-covering of finite connected graphs and $X_n$ be its $n$-th layer.
Then we have
\[
\ord_p(\kappa(X_n)) = \lambda n + \mu p^n + \nu,
\quad
n \gg 0
\]
with
\[
\lambda = \lambda(X_{\infty}/X) = \lambda(\Pic_{\Z_p}(X_{\infty})) - 1,
\quad
\mu = \mu(X_{\infty}/X) = \mu(\Pic_{\Z_p}(X_{\infty}))
\]
and some integer $\nu = \nu(X_{\infty}/X)$.
\end{thm}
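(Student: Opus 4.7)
The plan is to follow the standard Iwasawa-theoretic template: interpret $\kappa(X_n) = \#\Jac(X_n)$ (Kirchhoff's theorem) as, up to bounded error, the size of the $\Gamma_n$-coinvariants of a torsion Iwasawa module extracted from $\mathfrak{P} := \Pic_{\Z_p}(X_\infty)$, and invoke the standard asymptotic for such coinvariants. Here $\Gamma \cong \Z_p$ and $\Gamma_n \subset \Gamma$ denotes the open subgroup of index $p^n$. The four-term presentation of $\mathfrak{P}$ given by Proposition \ref{prop:funct_infin_Gal}(1) is the starting data.

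First I would separate off the degree component of $\mathfrak{P}$. Tensoring the split sequence \eqref{eq:Jac_defn} with $\Z_p$ at each level and passing to the inverse limit (exact by Mittag--Leffler, since $f_*$ is surjective on divisors hence on Picard groups and on Jacobians) yields
$0 \to \mathfrak{J} \to \mathfrak{P} \xrightarrow{\deg} \Z_p \to 0$
with $\mathfrak{J} := \varprojlim_n (\Z_p \otimes \Jac(X_n))$ and trivial $\Gamma$-action on the right-hand $\Z_p$. Since $\Z_p$ as a $\Z_p[[\Gamma]]$-module has $\mu = 0$ and $\lambda = 1$, this reduces the theorem to establishing $\ord_p(\#\Jac(X_n)) = \lambda(\mathfrak{J})\,n + \mu(\mathfrak{J})\,p^n + \nu$ for $n \gg 0$. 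Finite generation of $\mathfrak{J}$ over $\Z_p[[\Gamma]]$ follows from the presentation of $\mathfrak{P}$; torsionness will be a consequence of the control theorem below combined with the finiteness of each $\Jac(X_n)$.

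The heart of the proof is the control theorem. Applying Proposition \ref{prop:funct_infin_Gal}(2) with $H = \Gamma_n$ produces the commutative ladder \eqref{eq:Pic_func_2} comparing the presentations of $\mathfrak{P}$ and $\Pic_{\Z_p}(X_n)$. A snake-lemma chase should show that the natural surjection $\mathfrak{P}_{\Gamma_n} \twoheadrightarrow \Pic_{\Z_p}(X_n)$ has kernel of $\Z_p$-length bounded independently of $n$ (for $n$ sufficiently large). Concretely, closed subgroups of $\Z_p$ are of the form $I_v = p^{k_v}\Z_p$ or $I_v = 0$, and the explicit $\beta$-map from Proposition \ref{prop:funct_fin_Gal}(2) (sending $1$ to $\#(H \cap I_v)$), together with the left-column comparison ${}_\Gamma \Z_p = 0$ versus ${}_{\Gamma/\Gamma_n}\Z_p = p^n\Z_p$, stabilizes once $n \geq \max_v k_v$. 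Restricting the bounded-error comparison to Jacobians gives $\#(\Z_p \otimes \Jac(X_n)) = \#\mathfrak{J}_{\Gamma_n} \cdot p^{O(1)}$, which also forces $\mathfrak{J}$ to be $\Z_p[[\Gamma]]$-torsion.

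Finally, the standard asymptotic for coinvariants (see e.g.\ \cite[Chapter V]{NSW08}) says: for a finitely generated torsion $\Z_p[[\Gamma]]$-module $M$ with finite $M/\omega_n M$ (where $\omega_n$ corresponds to $\gamma^{p^n} - 1$ for a topological generator $\gamma$ of $\Gamma$), one has $\ord_p(\# M/\omega_n M) = \lambda(M)\,n + \mu(M)\,p^n + c$ for $n \gg 0$. Applied to $M = \mathfrak{J}$ and combined with the bounded control error, this yields the desired formula with $\lambda(X_\infty/X) = \lambda(\mathfrak{J}) = \lambda(\mathfrak{P}) - 1$ and $\mu(X_\infty/X) = \mu(\mathfrak{J}) = \mu(\mathfrak{P})$. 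The principal obstacle is the control theorem: one must carefully exploit the explicit form of $\beta$ and the structure of closed subgroups $I_v \subset \Z_p$ to confirm that the ramification data produce only bounded error independent of $n$, rather than errors growing with $n$ that would alter the leading asymptotic.
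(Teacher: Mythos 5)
Your opening step---splitting off the degree map via \eqref{eq:Jac_defn} to pass between the Jacobian-normalized and the Picard-normalized invariants---is in fact the \emph{entire} content of the paper's proof: the asymptotic formula itself is not reproved but quoted from \cite[Theorem A]{GV}, and the paper only supplies that translation. Everything after your first paragraph is therefore an attempted re-derivation of the cited result, and it contains a genuine gap: the control theorem you rely on is false for ramified coverings. Test it against case (b) of \S \ref{sec:eg}: $X$ is the $m$-cycle and every vertex is totally ramified in $X_{\infty}/X$, so $\Gamma$ fixes every vertex of every $X_n$, the transition maps $f_*$ are the identity on divisors, and one computes $\Pic_{\Z_p}(X_n) = \Z_p^m/p^n L(\Z_p^m)$ with $L$ the cycle Laplacian, whence $\Pic_{\Z_p}(X_{\infty}) = \varprojlim_n \Z_p^m/p^nL(\Z_p^m) \cong \Z_p^m$ with trivial $\Gamma$-action. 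Then $\Pic_{\Z_p}(X_{\infty})_{\Gamma_n} = \Z_p^m$ for every $n$, and the kernel of the natural surjection onto $\Pic_{\Z_p}(X_n)$ is $p^nL(\Z_p^m)$, a free $\Z_p$-module of rank $m-1$: it is not of bounded length, indeed not even finite. Correspondingly $\varprojlim_n \Z_p \otimes \Jac(X_n) \cong \Z_p^{m-1}$ is killed by $\gamma - 1$, so its quotient by $\gamma^{p^n}-1$ is the infinite module $\Z_p^{m-1}$ and the standard asymptotic for $\#\bigl(M/(\gamma^{p^n}-1)M\bigr)$ does not apply, while $\#\Jac(X_n) = mp^{n(m-1)}$ does grow. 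In this example the entire $\lambda n$ term is produced precisely by the ``error'' of your control map: the norm elements $N_{I_v}$ multiply the finite-level Laplacian by $p^n$, and at infinite level this is recorded only through the shrinking ideals ${}_{I_vH}\Lambda$ in the second column of \eqref{eq:Pic_func_4}, which your coinvariant comparison discards.

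Your template is the correct and standard one in the unramified case (all $I_v = 0$), where the classical control theorem does give a bounded error. But the point of Theorem \ref{thm:ICNF} is that the formula survives ramification, and there the growth of $\#\Jac(X_n)$ is \emph{not} governed by coinvariants of the limit module up to $p^{O(1)}$; one must instead track the full four-term ladders \eqref{eq:Pic_func_4}, including the unbounded factors $[U \cap I_vV : V]$ in the map $\beta$, as is done in \cite{GV}. Repairing your argument means redoing that bookkeeping, which is a substantive change rather than a tightening of the snake-lemma chase. It is worth noting that the counterexample to your key step already appears in the paper's own example section.
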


\begin{proof}
The result \cite[Theorem A]{GV} claims the formula with $\lambda = \lambda(\Jac_{\Z_p}(X_{\infty}))$ and $\mu = \mu(\Jac_{\Z_p}(X_{\infty}))$, where $\Jac_{\Z_p}(X_{\infty})$ is defined as the projective limit of $\Z_p \otimes_{\Z} \Jac(X_n)$ (cf.~Definition \ref{defn:Pic_inf}).
We have an exact sequence
\[
0 \to \Jac_{\Z_p}(X_{\infty}) \to \Pic_{\Z_p}(X_{\infty}) \to \Z_p \to 0
\]
induced by \eqref{eq:Jac_defn}.
This implies
\[
\lambda(\Jac_{\Z_p}(X_{\infty})) = \lambda(\Pic_{\Z_p}(X_{\infty})) - 1,
\quad
\mu(\Jac_{\Z_p}(X_{\infty})) = \mu(\Pic_{\Z_p}(X_{\infty})),
\]
so Theorem \ref{thm:ICNF} is also valid.
\end{proof}

\subsection{Kida's formula}\label{ss:Kida_pf}

A key to prove the main theorem is the following algebraic proposition.

\begin{prop}[{\cite[Proposition 8.5]{Kata_21}}]\label{prop:alg_Kida}
Let $\Gamma$ be a profinite group that is isomorphic to $\Z_p$, and $G$ a finite $p$-group.
Let $M$ be a finitely generated torsion $\Z_p[[\Gamma]][G]$-module.
We write $M_G$ for the $G$-coinvariant of $M$.
\begin{itemize}
\item[(1)]
We have $\mu(M) = 0$ if and only if $\mu(M_G) = 0$.
\item[(2)]
Suppose that we have $\pd_{\Z_p[[\Gamma]][G]}(M) \leq 1$, where $\pd$ denotes the projective dimension.
If the equivalent conditions in (1) hold, then we have $\lambda(M) = (\# G) \lambda(M_G)$.
\end{itemize}
\end{prop}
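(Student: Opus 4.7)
The plan is to prove (1) by reducing modulo $p$ and using nilpotence of the augmentation ideal of $\F_p[G]$ (which holds because $G$ is a $p$-group), and to prove (2) by promoting the hypothesis to a $\Z_p[G]$-statement, then showing that $H_1(G,M)$ vanishes and doing a $\Z_p$-rank count.

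For (1), I would use the characterization that for a finitely generated torsion $\Z_p[[\Gamma]]$-module $N$ one has $\mu(N)=0$ if and only if $N$ is finitely generated over $\Z_p$, equivalently $\ol{N}:=N/pN$ is finite. This applies to $M$, which is finitely generated over $\Z_p[[\Gamma]]$ because $G$ is finite. The forward direction is trivial since $M_G$ is a quotient of $M$. For the converse, let $\ol{M}=M/pM$ and let $J$ be the augmentation ideal of $\F_p[G]$ viewed inside $\F_p[[\Gamma]][G]$; since $G$ is a $p$-group, $J^k=0$ for some $k$, giving a finite filtration
\[
\ol{M} \supset J\ol{M} \supset J^2\ol{M} \supset \cdots \supset J^k\ol{M}=0
\]
whose $i$-th graded piece is an $\F_p[[\Gamma]]$-quotient of $(J^i/J^{i+1})\otimes_{\F_p}\ol{M}_G$, with $J^i/J^{i+1}$ finite-dimensional over $\F_p$. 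Hence finiteness of $\ol{M}_G=M_G/pM_G$ forces finiteness of each graded piece, and so of $\ol{M}$.

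For (2), I would first promote the hypothesis to $\pd_{\Z_p[G]}(M)\leq 1$: a length-one projective resolution $0\to P_1 \to P_0 \to M\to 0$ over $\Z_p[[\Gamma]][G]$ is automatically a $\Z_p[G]$-flat resolution (since $\Z_p[[\Gamma]]$ is $\Z_p$-flat), and $\mu(M)=0$ gives that $M$ is finitely generated over the Noetherian ring $\Z_p[G]$, so the flat dimension bound upgrades to $\pd_{\Z_p[G]}(M)\leq 1$. Then take a length-one free resolution $0\to Q_1 \to Q_0 \to M\to 0$ with $Q_i=\Z_p[G]^{s_i}$; freeness (rather than mere projectivity) is available because $\Z_p[G]$ is local for $G$ a $p$-group. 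Taking $\Z_p$-ranks gives $\lambda(M)=(s_0-s_1)\cdot\# G$. The long exact sequence of $G$-homology, using $H_i(G,Q_j)=0$ for $i\geq 1$, collapses to
\[
0 \to H_1(G,M) \to (Q_1)_G \to (Q_0)_G \to M_G \to 0,
\]
so $H_1(G,M)$ sits inside the $\Z_p$-free module $(Q_1)_G=\Z_p^{s_1}$ and is in particular $\Z_p$-torsion-free.

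The crux is the vanishing $H_1(G,M)=0$. For this I invert $p$: the complex $0\to Q_1\otimes\Q_p \to Q_0\otimes\Q_p \to M\otimes\Q_p \to 0$ is still exact, and $M\otimes\Q_p$ is projective over the semisimple ring $\Q_p[G]$, so the sequence splits. Any additive functor preserves splittings, so taking $G$-coinvariants yields an injection $(Q_1)_G\otimes\Q_p \hookrightarrow (Q_0)_G\otimes\Q_p$, i.e.\ $H_1(G,M)\otimes\Q_p=0$. Combined with torsion-freeness this forces $H_1(G,M)=0$, yielding $s_0-s_1=\lambda(M_G)$ and hence $\lambda(M)=\# G\cdot\lambda(M_G)$. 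The main obstacle is precisely this vanishing of $H_1$, which is where the hypothesis $\pd\leq 1$ is used essentially: without a short resolution of $M$, the split-injection argument after inverting $p$ has no purchase, and indeed the formula fails in general for arbitrary torsion modules.
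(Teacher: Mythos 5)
Your proof is correct. Part (1) is essentially the argument the paper sketches: the paper invokes Nakayama's lemma over the local ring $\Z_p[G]$, and your filtration of $M/pM$ by powers of the augmentation ideal of $\F_p[G]$ is precisely an unpacking of why that lemma applies (nilpotence of the maximal ideal modulo $p$ for a $p$-group $G$). For part (2) you take a genuinely different route. The paper's sketch shows that $M$ is actually \emph{free} of finite rank over $\Z_p[G]$ --- which rests on $M$ being $\Z_p$-torsion-free (a consequence of $\pd_{\Z_p[[\Gamma]]}(M)\le 1$ together with $\mu(M)=0$) and of finite projective dimension over $\Z_p[G]$, hence cohomologically trivial, hence projective, hence free over the local ring --- after which the $\lambda$-identity is immediate from a rank count and $(\Z_p[G]^r)_G\simeq\Z_p^r$. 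You instead stop at $\pd_{\Z_p[G]}(M)\le 1$, take a length-one finite free resolution, and compute $\lambda$ on both sides as an Euler characteristic; the only point to verify is $H_1(G,M)=0$, which you obtain by combining $\Z_p$-torsion-freeness (it embeds in $(Q_1)_G$) with vanishing after $\otimes\,\Q_p$ (splitting over the semisimple ring $\Q_p[G]$). Your version trades the cohomological-triviality theorem for a short direct homological computation, and it never needs to know that $M$ itself is $\Z_p[G]$-free; both arguments use the hypothesis $\pd\le 1$ at the same essential point, and both are complete. One small remark: your promotion step implicitly uses that $\Z_p[[\Gamma]][G]$ is flat over $\Z_p[G]$ (base change of the flat $\Z_p$-algebra $\Z_p[[\Gamma]]$) and that flat dimension equals projective dimension for finitely generated modules over the Noetherian ring $\Z_p[G]$; both facts are standard and correctly invoked, but they deserve a sentence in a written-up version.
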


\begin{proof}
Because of its importance, we include a rough sketch of the proof.
Claim (1) follows from Nakayama's lemma over the local ring $\Z_p[G]$.
Under the assumptions in claim (2), the $\Z_p[G]$-module $M$ is indeed free of finite rank.
Then the equality $\lambda(M) = (\# G) \lambda(M_G)$ is clear.
\end{proof}

The author \cite{Kata_27} extended this proposition to perfect complexes.
The result is so useful that we can deduce analogues of Kida's formula in various arithmetic situations.
However, we do not need such an extension in this paper.

We are in a position to prove Theorem \ref{thm:main_A}.
By Proposition \ref{prop:der_infin}, the situation in Theorem \ref{thm:main_A} can be realized by a voltage graph.
To be precise, let $\wtil{\Gamma}$ be a profinite group of the form
\[
\wtil{\Gamma} = \Gamma \times G
\]
with $\Gamma$ isomorphic to $\Z_p$ and $G$ a finite $p$-group.
Let $(X, \wtil{\Gamma}, \wtil{\alpha}, \wtil{\cI})$ be a voltage graph with $\wtil{\cI} = ( \wtil{I_v} \subset \wtil{\Gamma})_{v \in V_X}$.
Let us assume that $X(\wtil{\Gamma}/U, \wtil{\cI}_U)$ is connected for any open normal subgroup $U$ of $\wtil{\Gamma}$.
We also obtain $(X, \Gamma, \alpha, \cI)$ (write $\cI = (I_v \subset \Gamma)_{v \in V_X}$) by using Definition \ref{defn:vol_funct} applied to the closed subgroup $G$ of $\wtil{\Gamma}$.
We set
\[
X_{\infty} = X(\Gamma, \cI),
\quad
\wtil{X}_{\infty} = X(\wtil{\Gamma}, \wtil{\cI}).
\]
Now Theorem \ref{thm:main_A} can be rephrased as follows:

\begin{thm}\label{thm:main}
We have $\mu(\Pic_{\Z_p}(\wtil{X}_{\infty})) = 0$ if and only if we have $\mu(\Pic_{\Z_p}(X_{\infty})) = 0$ and $(\star)$ holds:
\begin{quote}
$(\star)$
for any $v \in V_X$, the group $\wtil{I_v}$ is either infinite or trivial.
\end{quote}
If these equivalent conditions hold, then we have
\[
\lambda(\Pic_{\Z_p}(\wtil{X}_{\infty}))
= (\# G) \lambda(\Pic_{\Z_p}(X_{\infty}))
- \sum_{v \in V_X} [\wtil{\Gamma}: \wtil{I_v}] (\# (G \cap \wtil{I_v}) - 1).
\]
\end{thm}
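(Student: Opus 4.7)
My plan is to reduce Theorem~\ref{thm:main} to Proposition~\ref{prop:alg_Kida} by extracting from $\wtil{M} := \Pic_{\Z_p}(\wtil{X}_\infty)$ an auxiliary module $\wtil{M}_1$ of projective dimension at most one. Since $\wtil{\Gamma}$ and $\Gamma$ are infinite pro-$p$, both ${}_{\wtil{\Gamma}}\Z_p$ and ${}_{\Gamma}\Z_p$ vanish, so Proposition~\ref{prop:funct_infin_Gal}(1) yields short exact sequences $0 \to \wtil{\cA} \xrightarrow{\wtil{\cL}} \wtil{\cB} \to \wtil{M} \to 0$ and $0 \to \cA \xrightarrow{\cL} \cB \to M \to 0$ with $M := \Pic_{\Z_p}(X_\infty)$, where $\wtil{\cA} = \bigoplus_v {}_{\wtil{I_v}}\Z_p[[\wtil{\Gamma}/\wtil{I_v}]]$ and so on. These are related by the functorial diagram of Proposition~\ref{prop:funct_infin_Gal}(2) with $H = G$.

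For the necessity of $(\star)$, I apply $(-)_G$ to the $\wtil{X}_\infty$-sequence. One verifies $(\wtil{\cB})_G \cong \cB$ summand-wise, and the natural map $\wtil{\cA} \to \cA$ from the diagram factors as $\wtil{\cA} \twoheadrightarrow (\wtil{\cA})_G \xrightarrow{\tilde\phi} \cA$, where $\tilde\phi$ is $v$-componentwise induced by the inclusion ${}_{\wtil{I_v}}\Z_p \hookrightarrow {}_{I_v}\Z_p$. This inclusion is the identity when $\wtil{I_v}$ is trivial, is $0 \hookrightarrow 0$ when $\wtil{I_v}$ is infinite, and is $p^{k_v}\Z_p \hookrightarrow \Z_p$ when $\wtil{I_v}$ is finite nontrivial of order $p^{k_v}$ (in which case $\wtil{I_v} \subset G$ and $I_v$ is trivial). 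Thus $\tilde\phi$ is injective with $\Cok(\tilde\phi) = \bigoplus_{v : \wtil{I_v} \text{ finite nontrivial}} (\Z/p^{k_v})[[\Gamma]]$, and a diagram chase (using injectivity of $\cL$) produces a short exact sequence $0 \to \Cok(\tilde\phi) \to \wtil{M}_G \to M \to 0$. Since $(\Z/p^{k_v})[[\Gamma]]$ has $\mu$-invariant $k_v > 0$, failure of $(\star)$ forces $\mu(\wtil{M}_G) > 0$, hence $\mu(\wtil{M}) > 0$ by Proposition~\ref{prop:alg_Kida}(1); whereas $(\star)$ gives $\wtil{M}_G \cong M$ and the two-way $\mu$-equivalence again via Proposition~\ref{prop:alg_Kida}(1).

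Assume $(\star)$ and partition $V_X = V_X^{\mathrm{triv}} \sqcup V_X^{\mathrm{inf}}$ according to whether $\wtil{I_v}$ is trivial or infinite. Then $\wtil{\cA} = \bigoplus_{v \in V_X^{\mathrm{triv}}} \Z_p[[\wtil{\Gamma}]]$ is $\Z_p[[\wtil{\Gamma}]]$-free, and $\wtil{\cB}$ splits as $\wtil{\cB}_{\mathrm{free}} \oplus \wtil{\cB}_{\mathrm{tor}}$ with $\wtil{\cB}_{\mathrm{free}} = \bigoplus_{v \in V_X^{\mathrm{triv}}} \Z_p[[\wtil{\Gamma}]]$ free and $\wtil{\cB}_{\mathrm{tor}} = \bigoplus_{v \in V_X^{\mathrm{inf}}} \Z_p[\wtil{\Gamma}/\wtil{I_v}]$ finite over $\Z_p$ (so $\mu = 0$ and $\lambda(\wtil{\cB}_{\mathrm{tor}}) = \sum_{v \in V_X^{\mathrm{inf}}} [\wtil{\Gamma}:\wtil{I_v}]$). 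Let $\wtil{\cL}_1 \colon \wtil{\cA} \to \wtil{\cB}_{\mathrm{free}}$ be $\wtil{\cL}$ followed by the projection. I claim $\wtil{\cL}_1$ is injective: if $\wtil{\cL}(a) \in \wtil{\cB}_{\mathrm{tor}}$, some nonzero $f \in \Z_p[[\Gamma]]$ kills it; injectivity of $\wtil{\cL}$ gives $fa = 0$, and $\Z_p[[\Gamma]]$-torsion-freeness of $\wtil{\cA}$ forces $a = 0$. The snake lemma applied to the diagram with rows $0 \to 0 \to \wtil{\cA} \xrightarrow{\id} \wtil{\cA} \to 0$ and $0 \to \wtil{\cB}_{\mathrm{tor}} \to \wtil{\cB} \to \wtil{\cB}_{\mathrm{free}} \to 0$ and vertical maps $0, \wtil{\cL}, \wtil{\cL}_1$ then yields
\[
0 \to \wtil{\cB}_{\mathrm{tor}} \to \wtil{M} \to \wtil{M}_1 \to 0, \qquad \wtil{M}_1 := \Cok(\wtil{\cL}_1),
\]
and the free resolution $0 \to \wtil{\cA} \to \wtil{\cB}_{\mathrm{free}} \to \wtil{M}_1 \to 0$ shows $\pd_{\Z_p[[\Gamma]][G]} \wtil{M}_1 \leq 1$.

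The identical analysis for $X_\infty$ yields $0 \to \cB_{\mathrm{tor}} \to M \to M_1 \to 0$ with $M_1 = \Cok(\cL_1)$, and the $\Z_p[G]$-freeness of $\wtil{\cA}, \wtil{\cB}_{\mathrm{free}}$ (together with the functorial diagram) identifies $(\wtil{M}_1)_G$ with $M_1$. Proposition~\ref{prop:alg_Kida}(2) then gives $\lambda(\wtil{M}_1) = (\#G)\lambda(M_1)$. Combining with additivity of $\lambda$ on the two extensions and the elementary index identity $\#G \cdot [\Gamma : I_v] = [\wtil{\Gamma}:\wtil{I_v}] \cdot \#(G \cap \wtil{I_v})$ (which follows from $\#\wtil{I_v} = \#I_v \cdot \#(\wtil{I_v} \cap G)$) yields the $\lambda$-formula, the trivial-$\wtil{I_v}$ terms contributing zero. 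The main obstacles are the injectivity of $\wtil{\cL}_1$ and the identification $(\wtil{M}_1)_G = M_1$ on the nose; both rely essentially on $(\star)$ to eliminate the non-projective summands of $\wtil{\cA}$, and the $v$-by-$v$ computation of $\Cok(\tilde\phi)$ in the necessity step also requires some care.
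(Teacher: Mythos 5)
Your proof is correct and follows essentially the same route as the paper: pass to $G$-coinvariants of the four-term sequence from Proposition \ref{prop:funct_infin_Gal}, extract the error term $\bigoplus_{v}(\Z_p/\#\wtil{I_v})[[\Gamma]]$ via the snake lemma to get the $\mu$-equivalence, and split off the $\Z_p$-finitely generated summands $\Z_p[\wtil{\Gamma}/\wtil{I_v}]$ to produce a module of projective dimension $\leq 1$ (your $\wtil{M}_1$ is exactly the paper's $\Pic_{\Z_p}'(\wtil{X}_{\infty})$) before invoking Proposition \ref{prop:alg_Kida}. The only difference is cosmetic: you verify the injectivity of $\wtil{\cL}_1$ by a direct torsion argument where the paper cites torsionness of the cokernel together with an auxiliary lemma.
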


As in Remark \ref{rem:main_valid}, if $[\wtil{\Gamma}: \wtil{I_v}]$ is infinity, condition $(\star)$ implies $\wtil{I_v}$ is trivial, so we set $[\wtil{\Gamma}: \wtil{I_v}] (\# (G \cap \wtil{I_v}) - 1) = 0$ in this formula.

\begin{proof}
Set $V_X^0 = \{v \in V_X \mid I_v = 0 \}$.
Note that 
\[
I_v = 0 
\quad \Leftrightarrow \quad \text{$I_v$ is finite}
\quad \Leftrightarrow \quad \text{$\wtil{I_v}$ is finite},
\]
where the first equivalence holds since $I_v$ is $p$-torsion-free.
The motivation for this definition is that we have ${}_{I_v} \Lambda = {}_{\wtil{I_v}} \Lambda = 0$ if (and only if) $v \not \in V_X^0$.
Then Proposition \ref{prop:funct_infin_Gal}(2) yields a commutative diagram
\begin{equation}\label{eq:Pic_func_6}
\xymatrix{
	0 \ar[r]
	& \bigoplus_{v \in V_X^0} \Z_p[[\wtil{\Gamma}/\wtil{I_v}]] \ar[r]^-{\cL_{X, \wtil{\Gamma}, \wtil{\cI}}} \ar[d]_{\beta}
	& \bigoplus_{v \in V_X} \Z_p[[\wtil{\Gamma}/\wtil{I_v}]] \ar@{->>}[d]^{\proj} \ar[r]
	& \Pic_{\Z_p}(\wtil{X}_{\infty}) \ar[r] \ar@{->>}[d]
	& 0 \\
	0 \ar[r]
	& \bigoplus_{v \in V_X^0} \Z_p[[\Gamma]] \ar[r]_-{\cL_{X, \Gamma, \cI}} 
	& \bigoplus_{v \in V_X} \Z_p[[\Gamma/I_v]] \ar[r] 
	& \Pic_{\Z_p}(X_{\infty}) \ar[r]
	& 0.
}
\end{equation}
Here, $\beta$ on the $v$-component sends $1$ to $\# \wtil{I_v}$.
By taking the $G$-coinvariant of the upper sequence, we obtain
\begin{equation}\label{eq:Pic_func_7}
\xymatrix{
	0 \ar[r]
	& \bigoplus_{v \in V_X^0} \Z_p[[\Gamma]] \ar[r]^-{\ol{\cL_{X, \wtil{\Gamma}, \wtil{\cI}}}} \ar@{^(->}[d]_{\ol{\beta}}
	& \bigoplus_{v \in V_X} \Z_p[[\Gamma/I_v]] \ar@{=}[d] \ar[r]
	& \Pic_{\Z_p}(\wtil{X}_{\infty})_G \ar[r] \ar@{->>}[d]
	& 0 \\
	0 \ar[r]
	& \bigoplus_{v \in V_X^0} \Z_p[[\Gamma]] \ar[r]_-{\cL_{X, \Gamma, \cI}} 
	& \bigoplus_{v \in V_X} \Z_p[[\Gamma/I_v]] \ar[r] 
	& \Pic_{\Z_p}(X_{\infty}) \ar[r]
	& 0.
}
\end{equation}
Here, $\ol{\beta}$ is the injective homomorphism that is the multiplication by $\# \wtil{I_v}$ on the $v$-component.
This also explains why the induced homomorphism $\ol{\cL_{X, \wtil{\Gamma}, \wtil{\cI}}}$ is injective.
By the snake lemma, we obtain an exact sequence
\[
0 \to \bigoplus_{v \in V_X^0} (\Z_p/\# \wtil{I_v})[[\Gamma]]
\to \Pic_{\Z_p}(\wtil{X}_{\infty})_G
\to \Pic_{\Z_p}(X_{\infty})
\to 0.
\]
Note that $\mu((\Z_p/\# \wtil{I_v})[[\Gamma]]) = 0$ if and only if $\# \wtil{I_v} = 1$.
Thus, we have $\mu(\Pic_{\Z_p}(\wtil{X}_{\infty})_G) = 0$ if and only if $\mu(\Pic_{\Z_p}(X_{\infty})) = 0$ and $\# \wtil{I_v} = 1$ for any $v \in V_X^0$.
Thanks to Proposition \ref{prop:alg_Kida}(1), this shows the first claim on the equivalence concerning $\mu = 0$.

In what follows, we assume the equivalent conditions.
Then the $\ol{\beta}$ in \eqref{eq:Pic_func_7} is the identity map, so we have $\Pic_{\Z_p}(\wtil{X}_{\infty})_G \simeq \Pic_{\Z_p}(X_{\infty})$.
However, Proposition \ref{prop:alg_Kida}(2) is not applicable to $\Pic_{\Z_p}(\wtil{X}_{\infty})$.
This is because the condition $\pd \leq 1$ does not hold in general.
To remedy this, we use a slightly modified version of the Picard groups that satisfies $\pd \leq 1$.

We define
\[
\cL_{X, \wtil{\Gamma}, \wtil{\cI}}':
\bigoplus_{v \in V_X^0} \Z_p[[\wtil{\Gamma}]] 	
\to \bigoplus_{v \in V_X^0} \Z_p[[\wtil{\Gamma}]]
\]
as the composition of $\cL_{X, \wtil{\Gamma}, \wtil{\cI}}$ with the projection to the $V_X^0$-components.
Define $\Pic_{\Z_p}'(\wtil{X}_{\infty})$ as the cokernel of this $\cL_{X, \wtil{\Gamma}, \wtil{\cI}}'$.
Clearly we have an exact sequence
\[
0 \to \bigoplus_{v \in V_X \setminus V_X^0} \Z_p[\wtil{\Gamma}/\wtil{I_v}]
\to \Pic_{\Z_p}(\wtil{X}_{\infty})
\to \Pic_{\Z_p}'(\wtil{X}_{\infty})
\to 0.
\]
Therefore, we have
\[
\lambda(\Pic_{\Z_p}(\wtil{X}_{\infty}))
= \lambda(\Pic_{\Z_p}'(\wtil{X}_{\infty})) 
+ \sum_{v \in V_X \setminus V_X^0} [\wtil{\Gamma}: \wtil{I_v}]
\]
and $\mu(\Pic_{\Z_p}'(\wtil{X}_{\infty})) = \mu(\Pic_{\Z_p}(\wtil{X}_{\infty})) = 0$.
We also define $\Pic_{\Z_p}'(X_{\infty})$ in the same way: 
We introduce $\cL_{X, \Gamma, \cI}'$ that is the projection of $\cL_{X, \Gamma, \cI}$ to the $V_X^0$-components and then define $\Pic_{\Z_p}'(X_{\infty})$ as its cokernel.
Then the same reasoning shows
\[
\lambda(\Pic_{\Z_p}(X_{\infty}))
= \lambda(\Pic_{\Z_p}'(X_{\infty}))
+ \sum_{v \in V_X \setminus V_X^0} [\Gamma: I_v].
\]

Since $\Pic_{\Z_p}'(\wtil{X}_{\infty})$ is torsion, the homomorphism $\cL_{X, \wtil{\Gamma}, \wtil{\cI}}'$ is injective (cf.~\cite[Lemma A.3]{Kata_21}), so we have
\[
\pd_{\Z_p[[\wtil{\Gamma}]]}(\Pic_{\Z_p}'(\wtil{X}_{\infty})) \leq 1.
\]
By the same reasoning as the original Picard groups, since the homomorphism $\ol{\beta}$ is the identity, we have
\[
\Pic_{\Z_p}'(\wtil{X}_{\infty})_G \simeq \Pic_{\Z_p}'(X_{\infty}).
\]
Now we are able to apply Proposition \ref{prop:alg_Kida}(2) to $\Pic_{\Z_p}'(\wtil{X}_{\infty})$.
As a result, we obtain
\[
\lambda(\Pic_{\Z_p}'(\wtil{X}_{\infty})) = (\# G) \lambda(\Pic_{\Z_p}'(X_{\infty})).
\]
By combining these formulas, we obtain
\[
\lambda(\Pic_{\Z_p}(\wtil{X}_{\infty}))
- \sum_{v \in V_X \setminus V_X^0} [\wtil{\Gamma}: \wtil{I_v}]
= (\# G) \left( \lambda(\Pic_{\Z_p}(X_{\infty})) 
- \sum_{v \in V_X \setminus V_X^0} [\Gamma: I_v] \right).
\]
An elementary observation shows
\[
(\# G) [\Gamma: I_v] - [\wtil{\Gamma}: \wtil{I_v}]
= [\wtil{\Gamma}: \wtil{I_v}] (\# (G \cap I_v) - 1).
\]
This completes the proof of Theorem \ref{thm:main}.
\end{proof}

\section{Examples}\label{sec:eg}

Let $m$ be a positive integer.
Let $X$ be the cycle graph with $m$ vertices:
The vertices are
\[
V_X = \{v_1, \dots, v_m\}
\]
and the edges are
\[
\bE_X = \{e_1, \dots, e_m, \ol{e_1}, \dots, \ol{e_m}\}
\]
such that $s(e_i) = v_i$ and $t(e_i) = v_{i + 1}$ for $1 \leq i \leq m$ ($v_{m+1}$ is understood to be $v_1$).

Let $\Gamma \simeq \Z_p$ and $G$ a cyclic group of order $p$.
Let
\[
\alpha: \bE_X \to \wtil{\Gamma} = \Gamma \times G
\]
be a voltage assignment such that the components of $\alpha(e_1)$ are generators of $\Gamma$ and $G$, and $\alpha(e_2), \dots, \alpha(e_m)$ are all the unit element.

We consider three choices of $\wtil{\cI}$:
\begin{itemize}
\item[(a)]
$\wtil{I_v} = G$ for any $v$.
\item[(b)]
$\wtil{I_v} = \Gamma$ for any $v$.
\item[(c)]
$\wtil{I_v} = \Gamma \times G$ for any $v$.
\end{itemize}
Note that condition $(\star)$ holds for (b) and (c) but does not hold for (a).
In each case, we set 
\[
X_{\infty} = X(\Gamma, \cI),
\quad
\wtil{X}_{\infty} = X(\wtil{\Gamma}, \wtil{\cI}),
\quad
X_n = X(\Gamma/\Gamma^{p^n}, \cI_{\Gamma^{p^n}}),
\quad
\wtil{X}_n = X(\wtil{\Gamma}/\Gamma^{p^n}, \wtil{\cI}_{\Gamma^{p^n}}).
\]
As will be clear, $\wtil{X}_n$ are indeed connected.

To compute the Iwasawa $\lambda$, $\mu$-invariants, it is convenient to prepare a lemma.
For positive integers $m$ and $N$, we define a graph $Y_{m, N}$ as follows.
We set $Y_{m, 1} = X$, which is the cycle graph with $m$ vertices.
Then we set $Y_{m, N}$ to be the ``totally ramified'' covering of $Y_{m, 1}$ of degree $N$.
More precisely, $Y_{m, N}$ has $m$ vertices, say $v_1, \dots, v_m$, and $m N$ unoriented edges, and for each $1 \leq i \leq m$, exactly $N$ edges connects $v_i$ and $v_{i+1}$.

\begin{lem}\label{lem:kappa}
We have $\kappa(Y_{m, N}) = m N^{m-1}$.
\end{lem}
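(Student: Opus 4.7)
The plan is to count spanning trees of $Y_{m,N}$ directly by a combinatorial argument. A spanning tree of $Y_{m,N}$ must contain exactly $m - 1$ edges (unoriented). Group the $mN$ unoriented edges of $Y_{m,N}$ into $m$ ``bundles'' $B_1, \dots, B_m$, where $B_i$ consists of the $N$ parallel edges between $v_i$ and $v_{i+1}$ (indices mod $m$).

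The key observation is that a spanning tree intersects each bundle $B_i$ in at most one edge: if it contained two parallel edges from some $B_i$, those two edges would already form a cycle. Since the total number of edges of a spanning tree is $m-1$ while there are $m$ bundles, exactly one bundle is ``skipped'' and from each of the remaining $m-1$ bundles exactly one edge is chosen. Conversely, any such choice produces a spanning tree: contracting each $B_i$ that contributes an edge gives a path on the $m$ vertices, so the graph is connected and has the right number of edges to be a tree.

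Hence the spanning trees are in bijection with pairs (choice of skipped bundle, choice of one representative from each of the remaining bundles), yielding $m \cdot N^{m-1}$ spanning trees. I would also mention the alternative derivation via the matrix--tree theorem: the Laplacian satisfies $\cL_{Y_{m,N}} = N \cdot \cL_{C_m}$ (where $C_m$ is the cycle graph), so if $0 = \lambda_0, \lambda_1, \dots, \lambda_{m-1}$ are the eigenvalues of $\cL_{C_m}$, then those of $\cL_{Y_{m,N}}$ are $N \lambda_i$, and
\[
\kappa(Y_{m,N}) = \frac{1}{m} \prod_{i=1}^{m-1} N \lambda_i = N^{m-1} \cdot \kappa(C_m) = m N^{m-1},
\]
using the classical fact $\kappa(C_m) = m$. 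There is essentially no obstacle here; the only mild subtlety is justifying that each bundle contributes at most one edge to a spanning tree, which follows immediately from the absence of multi-edge cycles in a tree.
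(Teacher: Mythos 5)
Your combinatorial count is correct and is exactly the direct spanning-tree count that the paper's one-line proof alludes to: each of the $m$ bundles of $N$ parallel edges contributes at most one edge to a tree, a tree has $m-1$ edges, so one bundle is skipped and the rest contribute one edge each, giving $m\cdot N^{m-1}$. The matrix--tree remark is a valid cross-check but not needed.
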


\begin{proof}
This is easily proved by counting the number of spanning trees.
\end{proof}

\begin{table}
\centering
\caption{}
\renewcommand{\arraystretch}{1.3} 
\begin{tabular}{c||c|c|c}\label{table:1}
& Case (a) & Case (b) & Case (c) \\
\hline \hline
$X_n$ & $\simeq Y_{mp^n, 1}$ & $\simeq Y_{m, p^n}$ & $\simeq Y_{m, p^n}$ \\
$\kappa(X_n)$ & $mp^n$ & $m p^{n(m-1)}$ & $m p^{n(m-1)}$ \\
$\mu(X_{\infty}/X)$ & $0$ & $0$ & $0$ \\
$\lambda(X_{\infty}/X)$ & $1$& $m - 1$ & $m - 1$ \\
\hline
$\wtil{X}_n$ & $\simeq Y_{mp^n, p}$ & $\simeq Y_{m p, p^n}$ & $\simeq Y_{m, p^{n+1}}$ \\
$\kappa(\wtil{X}_n)$ & $mp^n \cdot p^{mp^n-1}$ & $m p \cdot p^{n(m p -1)}$ & $m \cdot p^{(m-1) (n+1)}$ \\
$\mu(\wtil{X}_{\infty}/\wtil{X})$ & $m$ & $0$ & $0$ \\
$\lambda(\wtil{X}_{\infty}/\wtil{X})$ & $1$ & $pm - 1$ & $m - 1$ \\
\end{tabular}
\end{table}

Using this lemma, we can directly compute the Iwasawa $\lambda$, $\mu$-invariants of $X_{\infty}/X$ and of $\wtil{X}_{\infty}/\wtil{X}$ for cases (a), (b), and (c).
The results are summarized in Table \ref{table:1} .
For example, in case (a), we have $X_n \simeq Y_{m p^n, 1}$, so $\kappa(X_n) = m p^n$ by Lemma \ref{lem:kappa}.
This implies $\mu = 0$, $\lambda = 1$ for $X_{\infty}/X$.
The other entries are filled in a similar way (note that $X_{\infty}/X$ for (b) is the same as that for (c)).

Let us discuss the validity of Theorem \ref{thm:main_A}.
In case (a), $\mu = 0$ is not retained because condition ($\star$) does not hold, as the theorem predicts.
In case (b), we have
\[
(pm - 1) +1  = p \cdot ((m - 1) + 1)
\]
as predicted.
In case (c), we have
\[
(m- 1) + 1 = p \cdot ((m - 1) + 1) - \sum_{i=1}^m 1 \cdot (p-1)
\]
as predicted.

%

\section*{Acknowledgments}

I am grateful to Daniel Valli\`eres and Rusiru Gambheera for giving helpful comments on an earlier version of this paper.
This work is supported by JSPS KAKENHI Grant Number 22K13898.

{
\bibliographystyle{abbrv}
\bibliography{biblio}
}

\end{document}